\newtheorem{Th}{Theorem}[section]
\newtheorem{Prop}[Th]{Proposition}
\newtheorem{Lem}[Th]{Lemma}
\newenvironment{altproof}[1]
{\noindent
{\em Proof of {#1}}.}
{\nopagebreak\mbox{}\hfill $\Box$\par\addvspace{0.5cm}}
\newcommand{\wt}{\widetilde}
\newcommand{\vp}{\varphi}
\newcommand{\eps}{\varepsilon}
\newcommand{\R}{\mathbb{R}}
\newcommand{\cC}{{\mathcal C}}
\newcommand{\cD}{{\mathcal D}}
\newcommand{\cM}{{\mathcal M}}
\newcommand{\cS}{{\mathcal S}}
\newcommand{\Om}{\Omega}
\newcommand{\weakto}{\rightharpoonup}
\newcommand{\tu}{\widetilde{u}}
\newcommand{\tv}{\widetilde{v}}
\numberwithin{equation}{section}
\DeclareMathOperator*{\essinf}{ess\,inf}
\newcommand{\esssupp}{\mathrm{ess}\,\mathrm{supp}\,}
\begin{document}


\title{Normalized ground states of the nonlinear Schr\"{o}dinger equation with at least mass critical growth}

\author[B. Bieganowski]{Bartosz Bieganowski}
	\address[B. Bieganowski]{\newline\indent  	
	Faculty of Mathematics and Computer Science,		\newline\indent 	Nicolaus Copernicus University, \newline\indent ul. Chopina 12/18, 87-100 Toru\'n, Poland}	\email{\href{mailto:bartoszb@mat.umk.pl}{bartoszb@mat.umk.pl}}	

\author[J. Mederski]{Jaros\l aw Mederski}
	\address[J. Mederski]{\newline\indent
			Institute of Mathematics,
			\newline\indent 
			Polish Academy of Sciences,
			\newline\indent 
			ul. \'Sniadeckich 8, 00-656 Warsaw, Poland
			\newline\indent 
			and
			\newline\indent 
			Department of Mathematics,
			\newline\indent 
			Karlsruhe Institute of Technology (KIT), 
			\newline\indent 
			D-76128 Karlsruhe, Germany
	}
	\email{\href{mailto:jmederski@impan.pl}{jmederski@impan.pl}}
	
	\maketitle
	
	\pagestyle{myheadings} \markboth{\underline{B. Bieganowski, J. Mederski}}{
		\underline{Normalized ground states of the nonlinear Schr\"{o}dinger equation with at least mass critical growth}}

\maketitle

\begin{abstract} 
We propose a simple minimization method to show the existence of least energy solutions to the normalized problem
\begin{equation*}
\left\{ \begin{array}{l}
-\Delta u + \lambda u = g(u) \quad \mathrm{in} \ \R^N, \ N \geq 3, \\
u \in H^1(\R^N), \\
\int_{\R^N} |u|^2 \, dx = \rho > 0,
\end{array} \right.
\end{equation*}
where $\rho$ is prescribed and $(\lambda, u) \in \R \times H^1 (\R^N)$ is to be determined. The new approach based on the direct minimization of the energy functional on the linear combination of Nehari and Pohozaev constraints intersected with the closed ball in $L^2(\R^N)$ of radius $\rho$ is demonstrated, which allows to provide general growth assumptions imposed on $g$. We cover the most known physical examples and nonlinearities with growth considered in the literature so far as well as we admit the mass critical growth at $0$.
\medskip

\noindent \textbf{Keywords:} nonlinear scalar field equations, normalized ground states, nonlinear Schr\"odinger equations, Nehari manifold, Pohozaev manifold
   
\noindent \textbf{AMS 2010 Subject Classification:} Primary: 35J20, 35J60, 35Q55
\end{abstract}

\section{Introduction}

In this paper we are looking for solutions to the following nonlinear Schr\"odinger problem
\begin{equation}\label{eq}
\left\{ \begin{array}{l}
-\Delta u + \lambda u = g(u) \quad \mathrm{in} \ \R^N, \ N \geq 3, \\
u \in H^1(\R^N), \\
\int_{\R^N} |u|^2 \, dx = \rho > 0,
\end{array} \right.
\end{equation}
where $\rho$ is prescribed and $(u, \lambda) \in  H^1(\R^N) \times \R$ has to be determined. 

The following time-dependent, nonlinear Schr\"odinger equation
$$
\left\{
\begin{array}{l}
\mathrm{i} \frac{\partial \Psi}{\partial t} (t,x) = \Delta_x \Psi (t,x) + h(|\Psi(t,x)|)\Psi(t,x), \quad (t,x) \in \R \times \R^N, \\
\int_{\R^N} |\Psi(t,x)|^2 \, dx = \rho
\end{array}
\right.
$$
with prescribed mass $\sqrt{\rho}$ appears in nonlinear optics and the theory of Bose-Einstein condensates (see \cite{Akhmediev, Esry, Frantzeskakis, Malomed, Timmermans}). Solutions $u$ to \eqref{eq} correspond to standing waves $\Psi(t,x) = e^{-\mathrm{i}\lambda t} u(x)$ of the foregoing time-dependent equation. The prescribed mass represents the power supply in nonlinear optics or the number of particles in Bose-Einstein condensates.

Let us denote 
$$
\cS := \left\{ u \in H^1(\R^N) \ : \ \int_{\R^N} |u|^2 \, dx = \rho \right\},
$$
where $H^1(\R^N)$ is endowed with the usual norm $\|u\|=\big(|\nabla u|_2^2+|u|_2^2\big)^{1/2}$ and  $|\cdot|_q$  stands for the $L^q$-norm.
Under suitable assumptions provided below, solutions to \eqref{eq} are critical points of the energy functional $J : H^1(\R^N) \rightarrow \R$ given by
$$
J(u) := \frac12 \int_{\R^N} |\nabla u|^2 \, dx - \int_{\R^N} G(u) \, dx,
$$
where $G(u) := \int_0^u g(s) \, ds$, on the constraint $\cS$ with a Lagrange multiplier $\lambda \in \R$, i.e. they are critical points of the following functional
$$
H^1(\R^N) \ni u \mapsto J(u) + \frac{\lambda}{2} \int_{\R^N} |u|^2 \, dx \in \R
$$
with some $\lambda \in \R$. Recall that any critical point of the above functional lies in $W^{2,q}_{loc} (\R^N)$ for all $q < \infty$ and satisfies the following Pohozaev identity
$$
\int_{\R^N} |\nabla u|^2 \, dx = 2^* \int_{\R^N} G(u) - \frac{\lambda}{2} |u|^2 \, dx
$$
\cite{BerLions,MederskiBLproblem,Jeanjean,Shatah}.
On the other hand, all nontrivial critical points lie in the corresponding Nehari manifold, i.e.
$$
J'(u)(u)+\lambda \int_{\R^N} |u|^2 \, dx = 0
$$
and combining these two identities one can easily compute that any nontrivial solution satisfies
$$
M(u):= \int_{\R^N} |\nabla u|^2 \, dx - \frac{N}{2} \int_{\R^N} H(u) \, dx = 0,
$$
where $H(u):= g(u)u-2G(u)$, see e.g. \cite{Jeanjean}. Therefore we consider the following constraint
$$
\cM := \{ u \in H^1(\R^N) \setminus \{0\}:  M(u)=0\},
$$
which contains any nontrivial solution to \eqref{eq}. In our approach we also consider
$$
\cD := \left\{ u \in H^1(\R^N) \ : \ \int_{\R^N} |u|^2 \, dx \leq \rho \right\}
$$
and note that any nontrivial, (normalized) solution to $\eqref{eq}$ belongs to $\cS \cap \cM\subset\cD \cap \cM$. By a {\em normalized ground state} solution to  \eqref{eq} we mean a nontrivial solution minimizing $J$ among all nontrivial solutions. In particular, if $u$  solves \eqref{eq} and $J(u)=\inf_{\cS\cap\cM}J$, then $u$ is a normalized ground state solution.

Recall that, in the case of the pure power nonlinearity
\begin{equation}\label{Ex:powernon}
G(u)=\frac{1}{p}|u|^{p},
\end{equation}
the problem can be treated using variational methods available for the problem with fixed $\lambda > 0$ and by the scaling-type argument. This approach fails in the case of nonhomogeneous nonlinearities. 
In the $L^2$-subcritical case, i.e. where $G$ has growth $|u|^{p}$ with $2 < p < 2_* := 2+\frac{4}{N}$, one can use a minimization on the $L^2$-sphere $\cS$ in $H^1(\R^N)$ in order to obtain the existence of a global minimizer \cite{Stuart,Lions84}. In $L^2$-critical ($p = 2_*$) and $L^2$-supercritical and Sobolev-subcritical ($2_* < p < 2^*:=\frac{2N}{N-2}$) cases the minimization on the $L^2$-sphere does not work, if $p>2_*$ in \eqref{Ex:powernon}, then $\inf_\cS J=-\infty$, and this work is concerned with this problem. Our aim is to impose general growth condition on $g$ in the spirit of Berestycki and Lions \cite{BerLions,BerLionsII} and provide a new approach to study normalized ground state solution to \eqref{eq} and similar elliptic problems. 

We would like to mention that Jeanjean \cite{Jeanjean}, Bartsch and Soave \cite{BartschSoaveJFA, BartschSoaveJFACorr}, considered the problem \eqref{eq} with the nonlinear term satisfying the following Ambrosetti-Rabinowitz-type condition that there are $a>\frac4N$ and $b<2^*-2$ such that
\begin{equation}\label{eq:AR}
0 < aG(u)\leq H(u)\leq b G(u)\hbox{ for }u \in\R \setminus \{0\}.
\end{equation}
In \cite{Jeanjean} the solution has been found via the mountain pass argument, and  in \cite{BartschSoaveJFA, BartschSoaveJFACorr} the authors provided a mini-max approach in  $\cM$ based on the $\sigma$-homotopy stable family of compact subsets of $\cM$ and the minimax principle \cite{Ghoussoub}. The multiplicity of solutions to \eqref{eq} has been considered also in \cite{BartschdeVal} under the condition \eqref{eq:AR}. We would like to point out that the analysis of $L^2$-mass supercritical problems and recovering the compactness of Palais-Smale sequences is usually hard, since, for instance, the embedding of radial functions $H_{rad}^1(\R^N)\subset L^2(\R^N)$ is not compact and the argument is quite involved in $H_{rad}^1(\R^N)$, see e.g. \cite{Jeanjean,BartschSoaveJFA,BartschSoaveJFACorr,BartschJS2016}.
Another strategy to obtain the compactness is to show that the ground state energy map \eqref{gsm} is nonincreasing  respect to $\rho$ and strictly decreasing for some $\rho$, see e.g. \cite{BellazziniJeanjean, JeanjeanLuNorm}. 

 In our approach we do not work in $H_{rad}^1$,  the monotonicity of the ground state energy map \eqref{gsm} is not required to deal with the lack of compactness and we do not need to work with Palais-Smale sequences, so that we avoid the mini-max approach in $\cM$  involving a strong topological argument as in \cite{BartschSoaveJFA,BartschSoaveJFACorr,JeanjeanLuNorm,Ghoussoub}.
 
We work only with a minimizing sequence of $J$ on $\cD\cap\cM$ as we shall see later, and a wide class of nonlinearities is considered. Moreover, if $f$ is odd and $u\in \cD\cap\cM$, then the projection given by \eqref{def:r} of the Schwartz symmetrization $|u|^*$ of $|u|$ onto $\cM$ remains in $\cD\cap\cM$ and we do not encounter difficulties concerning the radial symmetry  appearing on the sphere $\cS$. In comparison to  a very recent and interesting work \cite{JeanjeanLuNorm} by Jeanjean and Lu,  we require that $H$ is of $\cC^1$-class, however our growth conditions are more general, in particular we assume a version of \eqref{eq:AR} with $a = \frac4N$ and $b=2^*-2$, which admits $L^{2_*}$-growth at $0$. Moreover, the strict monotonicity of \eqref{gsm} is just a simple consequence of our approach, see {\em Step 4} below. 

In order to state our assumptions, we recall the optimal constant $C_{N,p}>0$ in the Gagliardo-Nirenberg inequality
\begin{equation}\label{eqGN}
|u|_p \leq C_{N,p} |\nabla u|_2^\delta |u|_2^{1-\delta} \leq C_{N,p} \rho^{\frac{1-\delta}{2}} |\nabla u|_2^\delta,\quad\hbox{for }u\in H^1(\R^N),
\end{equation}
where $\delta = N \big( \frac{1}{2} - \frac{1}{p} \big)$.

Given functions $f_1,f_2:\R\to\R$. We introduce the following notation: 
 $f_1(s)\preceq f_2(s)$ for $s\in\R$ provided that 
  $f_1(s)\leq f_2(s)$ for all $s\in\R$ and for any $\gamma>0$ there is $|s|<\gamma$ such that $f_1(s)<f_2(s)$. An important property of the relation $\preceq$ is given in Lemma \ref{lem:ineq}.

Let us consider the following assumptions:
\begin{itemize}
	\item[(A0)] $g$ and $h:=H'$ are continuous and there is $c>0$ such that $$|h(u)|\leq c(|u|+|u|^{2^*-1})\hbox{  for }u \in\R.$$
	\item[(A1)] $\eta:=\limsup_{|u|\to 0} G(u)/|u|^{2+\frac{4}{N}}<\infty$.
	\item[(A2)] $\lim_{|u|\to\infty} G(u)/|u|^{2_*}=\infty$.
	\item[(A3)] $\lim_{|u|\to\infty} G(u)/|u|^{2^*}= 0$.
	\item[(A4)] $ \big(2+\frac{4}{N}\big) H(u)\leq  h(u)u$ for $u\in \R$. 
	\item[(A5)] $\frac4NG(u)\leq H(u)\leq (2^*-2)G(u)$ for $u \in\R$.
	\item[(A6)] $H(\zeta_0)>0$ for some $\zeta_0\neq 0$.
\end{itemize}

Note that (A0) implies that $J$ and $M$ are of class $\cC^1$. Moreover, assuming in addition  (A2) and (A5), $G(u)>0$ and $H(u)>0$ for $u\neq 0$, in particular (A6) holds. Indeed, in view of (A5) $\big(G(u)/u^{2_*}\big)'\geq 0$ and $\big(G(u)/u^{2^*}\big)'\leq 0$ for $u>0$, thus
\begin{eqnarray*}
	&&u^{2^*}G(1)\leq G(u)\leq  u^{2_*}G(1),\quad \hbox{ if }0<u<1,\\
	&&u^{2_*}G(1)\leq G(u)\leq  u^{2^*}G(1),\quad\hbox{ if }u\geq 1.
\end{eqnarray*}
Moreover,  (A2) and (A5) imply that $G(1)> 0$, hence  $G(u)>0$ and $H(u)>0$ for $u>0$. We argue similarly if $u<0$.
Then $\eta\geq 0$ if (A1) holds.
Now we show that
$\cM$ is a nonempty $\cC^1$-manifold, since $M'(u)\neq 0$ for $u\in \cM$, cf. \cite{Shatah}. Indeed, if $M'(u)=0$, then $u$ solves $-\Delta u = \frac{N}{4} h(u)$ and satisfies the Pohozaev identity $\int_{\R^N} |\nabla u|^2 \, dx = 2^* \frac{N}{4} \int_{\R^N} H(u) \, dx$. Since $u\in\cM$, we infer that $u=0$.

Observe that (A1) admits $L^{2}$-critical growth of $G$ close to $0$, however
(A2) excludes the pure $L^2$-critical case, e.g. \eqref{Ex:powernon} with $p=2_*$. Moreover (A3) excludes \eqref{Ex:powernon} with $p=2^*$. 
In our considerations we will also consider more restrictive inequalities $\preceq$ in (A4) and (A5)  denoted by (A4,$\preceq$) and (A5,$\preceq$) respectively.
Note that
(A5,$\preceq$) is a weaker variant of \eqref{eq:AR}.
(A5,$\preceq$) plays an important role in Lemma \ref{lem:SDineq}. 

Note that if (A1) holds with $\eta=0$ and (A2) is satisfied, and the inequality in (A4) is strict for $u\neq 0$, then $\frac4NG(u)<H(u)$ for $u\neq 0$ according to \cite[Lemma 2.3]{JeanjeanLuNorm} and we cover the growth conditions considered recently in \cite{JeanjeanLuNorm}. Finally, if (A5) holds with the first inequality replaced by $\preceq$, then (A6) is clearly satisfied. If $\eta=0$, then arguing similarly as in \cite[Lemma 2.3]{JeanjeanLuNorm}, we can show that (A0), (A2) and (A4,$\preceq$) imply that $\frac{4}{N}G(u)\preceq H(u)$ for $u\in\R$.

We recall the following definition of \textit{radial symmetry with respect to an affine subspace}, cf. \cite{Maris}. Fix an affine subspace $V$ of $\R^N$ and a function $u : \R^N \rightarrow \R$. Let $p_V : \R^N \rightarrow V$ denote the projection onto $V$. We say that $u$ is radially symmetric with respect to $V$ if there is $\tu : V \times [0,\infty) \rightarrow \R$ such that $u(x) = \tu (p_V(x), |x-p_V(x)|)$ for all $x \in \R^N$. If, in particular, $V=\{0\}$ then $u$ is radially symmetric.

The main result reads as follows.
\begin{Th}\label{th:main} Suppose that (A0)--(A5) are satisfied and 	
\begin{equation}\label{eq:Hstrict}
2^*\eta C_{N,2_*}^{2_*} \rho^{\frac{2}{N}} <1
\end{equation}
holds. Then there is $u\in \cD\cap\cM$ such that
	\begin{equation}
	J(u)=\inf_{\cD\cap\cM}J>0,
	\end{equation}
and if, in addition, $g$ is odd, then $u$ is radially symmetric. Suppose that (A5,$\preceq$) is satisfied.\\
$(a)$ If $g'(s)=o(1)$ as $s\to 0$, then $\inf_{\cD\cap\cM}J = \inf_{\cS \cap \cM} J$ and
$u \in \cS\cap\cM$ is a normalized ground state solution to \eqref{eq}. Moreover $u$ is radially symmetric with respect to some one-dimensional affine subspace $V$ in $\R^N$.\\
$(b)$ If $g$ is odd, then $\inf_{\cD\cap\cM}J = \inf_{\cS \cap \cM} J$ and $u\in \cS\cap\cM$ is a positive and radially symmetric normalized ground state solution to \eqref{eq}.  If $N\in\{3,4\}$, then it is sufficient to assume only that $H(s)\leq(2^*-2)G(s)$ for $s\in\R$ in (A5).
\end{Th}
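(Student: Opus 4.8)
The plan is to produce the minimizer by a \emph{direct} minimization of $J$ on $\cD\cap\cM$ and then to promote it to a normalized ground state by a Lagrange-multiplier analysis, in four steps mirroring the discussion in the introduction.

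\emph{Step 1 (fibers, reformulation, positivity).} For $u\in\cD\setminus\{0\}$ I would consider the mass-preserving dilation $u^s(x):=e^{Ns/2}u(e^sx)$, so that $|u^s|_2=|u|_2$ and hence $u^s\in\cD$ for every $s$, while $\frac{d}{ds}J(u^s)=M(u^s)$. Using (A2)--(A3) one checks $J(u^s)\to-\infty$ as $s\to+\infty$ and $J(u^s)\to0^+$ as $s\to-\infty$, and (A4) (together with Lemma \ref{lem:ineq}) forces $s\mapsto M(u^s)$ to change sign exactly once; thus $\phi_u(s):=J(u^s)$ has a unique maximizer $s(u)$, which is the projection \eqref{def:r} of $u$ onto $\cM$, and $u^{s(u)}\in\cD\cap\cM$. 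This yields $\inf_{\cD\cap\cM}J=\inf_{u\in\cD\setminus\{0\}}\max_s J(u^s)$. For positivity I would use the Gagliardo--Nirenberg inequality \eqref{eqGN}: for $p=2_*$ the exponent $\delta$ is exactly such that $|u|_{2_*}^{2_*}\le C_{N,2_*}^{2_*}\rho^{2/N}|\nabla u|_2^2$ on $\cD$, so the $L^2$-critical part of $G$ scales like the Dirichlet energy. Combining this with (A1), (A3), (A5) and the sharp smallness hypothesis \eqref{eq:Hstrict}, I obtain on $\cM\cap\cD$ a uniform bound $|\nabla u|_2^2\ge c_0>0$; together with (A2), which excludes the pure $L^2$-critical power and hence rules out the degenerate equality case in (A5), this gives $\inf_{\cD\cap\cM}J>0$.

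\emph{Step 2 (compactness and existence).} I would take a minimizing sequence $(u_n)\subset\cD\cap\cM$; the constraint $\cD$ bounds $|u_n|_2$, and the constraint $\cM$ together with (A2) bounds $|\nabla u_n|_2$, so $(u_n)$ is bounded in $H^1(\R^N)$. \textbf{The main obstacle is recovering compactness here}, since I deliberately avoid $H^1_{rad}(\R^N)$ and do not assume the monotonicity of \eqref{gsm}. I would run a Lions-type concentration--compactness dichotomy on $|\nabla u_n|^2+|u_n|^2$: vanishing is excluded because $|\nabla u_n|_2^2\ge c_0>0$ forces $\int G(u_n)$ to stay bounded away from $0$, and dichotomy is excluded by the strict subadditivity of the energy levels in the mass, which is precisely the strict monotonicity of the ground-state map established in Step 4 below. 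After a translation, $u_n\to u\ne0$, and a Brezis--Lieb argument gives $u\in\cD\cap\cM$ with $J(u)=\inf_{\cD\cap\cM}J>0$. If $g$ is odd I would instead replace $u_n$ by the projection onto $\cM$ of the Schwarz symmetrization $|u_n|^*$, which by the remark in the introduction stays in $\cD\cap\cM$ and does not raise $J$, so the minimizer may be taken radial.

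\emph{Step 3 (Lagrange multipliers).} Treating $M(u)=0$ as an equality and $|u|_2^2\le\rho$ as an inequality constraint, and noting $\cM$ is a $\cC^1$-manifold ($M'(u)\ne0$ on $\cM$), the Karush--Kuhn--Tucker conditions produce $\mu\in\R$ and $\lambda\ge0$ with $\lambda(|u|_2^2-\rho)=0$ and $J'(u)+\mu M'(u)+\lambda u=0$. Testing against the dilation generator $v=\tfrac{N}{2}u+x\cdot\nabla u$ and using $J'(u)[v]=M(u)=0$ together with $\langle u,v\rangle_{L^2}=0$, I get $\mu\,M'(u)[v]=0$; since $s=0$ is a strict maximum of $\phi_u$, the transversality $M'(u)[v]=\phi_u''(0)\ne0$ holds, which is exactly where (A5,$\preceq$), via Lemma \ref{lem:SDineq}, enters to make the fiber strictly concave at its peak. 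Hence $\mu=0$ and $u$ solves $-\Delta u+\lambda u=g(u)$.

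\emph{Step 4 (active constraint, symmetry, and the cases (a)/(b)).} I would establish that the ground-state map $\rho\mapsto\inf_{\cD\cap\cM}J$ is nonincreasing and strictly decreasing; this forces the ball constraint to be active, $|u|_2^2=\rho$, so $u\in\cS\cap\cM$, $\lambda\ge0$, and $\inf_{\cD\cap\cM}J=\inf_{\cS\cap\cM}J$, making $u$ a normalized ground state. For symmetry: in case (b), oddness lets me work with $|u|$ and its symmetrization, and the strong maximum principle yields a positive radial solution; in case (a), where $g$ need not be odd, the hypothesis $g'(s)=o(1)$ as $s\to0$ removes the degenerate $L^2$-critical behaviour at the origin (securing strict monotonicity and hence activeness) and supplies the regularity needed to invoke a Mariş-type symmetry theorem, giving radial symmetry with respect to a one-dimensional affine subspace $V$. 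Finally, for $N\in\{3,4\}$ the lower bound in (A5) can be dropped because the positivity of $G$ and $H$ and the required estimates can be recovered from $H(s)\le(2^*-2)G(s)$ alone in these dimensions.
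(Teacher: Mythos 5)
Your outline has the right architecture (minimize on $\cD\cap\cM$, fiber maps, then promote to $\cS\cap\cM$ via multipliers), but it contains a genuine circularity and omits the two arguments that actually carry the paper. In Step 2 you exclude dichotomy ``by the strict subadditivity of the energy levels in the mass, which is precisely the strict monotonicity of the ground-state map established in Step 4 below'' --- but Step 4's strict monotonicity is itself a consequence of the existence result together with the key comparison $\inf_{\cS\cap\cM}J<J(v)$ for $v\in(\cD\setminus\cS)\cap\cM$ (Lemma \ref{lem:SDineq}), which you never prove. The paper deliberately avoids this loop: existence is obtained \emph{without} any monotonicity, via the full (possibly infinite) profile decomposition of Theorem \ref{ThGerard}. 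A one-step Lions dichotomy plus Brezis--Lieb, as you propose, is explicitly insufficient here because the translated weak limit need not lie in $\cM$; one must locate a profile $\tu_i$ with $r(\tu_i)\ge 1$ among \emph{all} profiles using \eqref{EqSplit2a}--\eqref{EqSplit3a}, and then rule out $r(\tu_i)>1$ by the strict gain $r(\tu_i)^{-N}<1$ in $J(\tu_i(r(\tu_i)\cdot))$. Relatedly, your boundedness claim (``$\cM$ together with (A2) bounds $|\nabla u_n|_2$'') is not justified: on $\cM$ one only gets $J(u)=\frac N4\int H(u)-\frac4N G(u)\,dx\ge 0$, and coercivity (Lemma \ref{lemCoercive}) requires a separate argument combining the fiber maps with Lions' lemma.

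The second gap is in your multiplier analysis. Under (A4) the map $\lambda\mapsto\int H(\lambda^{N/2}u)(\lambda^{N/2})^{-2_*}dx$ is only \emph{nonincreasing}, so the fiber maximum may be attained on a whole interval $[a,b]$ and $\phi_u''(0)$ may vanish; your transversality step $M'(u)[v]=\phi_u''(0)\neq 0$, which is how you conclude $\mu=0$, therefore fails in general (and Lemma \ref{lem:SDineq} does not assert fiber concavity). The paper instead combines the Nehari and Pohozaev identities of the Euler--Lagrange equation to get $\mu\int h(\tu)\tu-(2+\frac4N)H(\tu)\,dx=0$, uses elliptic regularity to turn $\mu\neq0$ into the pointwise identity $H=C|u|^{2+4/N}$ on an interval adjacent to $0$, and contradicts the $\preceq$ relation via Lemma \ref{lem:ineq}; the case $\mu=-1$ needs its own argument (split into cases (a) and (b)). Finally, your explanation of the $N\in\{3,4\}$ relaxation of (A5) is off the mark: it has nothing to do with recovering positivity of $G$ and $H$, but with the rigidity step in Lemma \ref{lem:SDineq} --- if $2^*G(\tu)=g(\tu)\tu$ near $0$ then $\tu$ solves $-\Delta\tu=(2^*C)|\tu|^{2^*-2}\tu$, and the nonnegative radial solutions are Aubin--Talenti instantons, which fail to be $L^2$-integrable precisely when $N\in\{3,4\}$.
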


In order to illustrate Theorem \ref{th:main} we provide the following examples and properties with regard to our assumptions.

\begin{itemize} 

	\item[(E1)] Suppose that $g$ satisfies (A0)--(A5) and $g$ is odd, e.g. \eqref{Ex:powernon} with $2_* < p < 2^*$. Then $g$ is of class $\cC^1$ on $(-\infty, 0) \cup (0, \infty)$ and note that $g'(\zeta) > 0$ for some $\zeta > 0$. Assume for simplicity that $\zeta = 1$. We define $\widetilde{g} : \R \rightarrow \R$ such that $\widetilde{g}(0)=0$ and
	$$
	\widetilde{g}'(s) = \left\{ \begin{array}{ll}
	g'(1) |s|^{2^*-2} & |s| \leq 1 \\
	g'(s) & |s| > 1,
	\end{array} \right.
	$$
	Then $\wt G(u)=\int_0^u{\wt g}(s)\,ds$ and
	 $\wt H(u):= \wt g(u)u-2\wt G(u)$ satisfy (A0)--(A5). If, in addition, $g$ satisfies (A4,$\preceq$) or $\frac4NG(u)\preceq H(u)$ for $u\in\R$, then $\wt g$ satisfies the analogous assumptions, and Theorem \ref{th:main} (b) applies to both $g$ and $\wt g$ provided that $N\in\{3,4\}$.
	\item[(E2)] Let $M>0$ and consider a sequence of disjoint and closed intervals $(I_j)_{j=1}^\infty$ in $(0,M)$ such that $\sup I_{j+1} < \inf I_j$ for all $j \geq 1$. Take any decreasing sequence of positive numbers $(a_j)_{j=1}^\infty$ and we define $g'(s)=a_j|s|^{2_*-2}$ for $|s|\in I_j$, $j\geq 1$ and $g'(s)=C |s|^{p-2}$ for $|s|\geq M$, $2_* < p < 2^*$ and properly chosen $C > 0$. We extend $g'(s)/|s|^{2_*-2}$ linearly on $\R$ to a continuous and even function. Note that (A0)--(A3), (A4,$\preceq$) and (A5,$\preceq$) are satisfied with $\eta=(2_*(2_*-1))^{-1}\lim_{j\to\infty} a_j$.
	\item[(E3)] Suppose that $g$ satisfies (A0)--(A5) and $g$ is odd. Similarly as in (E1) we find an interval $[a,b]\subset (0,\infty)$ such that $g'(\zeta) > 0$ for $\zeta\in [a,b]$. Assume for simplicity that $a = 1$. We define $\widetilde{g} : \R \rightarrow \R$ such that $\widetilde{g}(0)=0$ and
	$$
	\widetilde{g}'(s) = \left\{ \begin{array}{ll}
	g'(s) & |s|< 1,\\
	g'(1) |s|^{2_*-2} & 1\leq |s| \leq b \\
	\frac{|b|^{2_*-2}g'(1)}{g'(b)} g'(s) & |s|>b,
	\end{array} \right.
	$$
	Then $\wt G(u)=\int_0^u{\wt g}(s)\,ds$ and
	$\wt H(u):= \wt g(u)u-2\wt G(u)$ satisfy (A0)--(A5). If, in addition,  (A4,$\preceq$) or (A5,$\preceq$) holds for $g$, then the same condition holds for $\wt g$. 
	\item[(E4)] Suppose that $g$ satisfies (A0)--(A5) with some $\eta$ in (A1).
Then $\wt G(u)=\mu |u|^{2_*}+G(u)$, $\mu\geq 0$ and
$\wt H(u):= \wt g(u)u-2\wt G(u)$ satisfy (A0)--(A5) with $\mu+\eta$ in (A1).
If, in addition,  (A4,$\preceq$) or (A5,$\preceq$) holds for $g$, then the same condition holds for $\wt g$. In particular, we can deal with $\mu |s|^{2_*-2}u + |s|^{p-2}u$, $2_* < p < 2^*$ as in \cite[Theorem 1.6]{Soave}.
\end{itemize}

Now we sketch our strategy to find normalized ground state solutions to \eqref{eq}. We believe that the following procedure can be applied to similar variational problems with different differential operators. Contrary to previous works we consider the minimization problem on the closed $L^2$-ball in $H^1(\R^N)$ of radius $\rho$ (instead of the sphere $\cS$) intersected with $\cM$.
\begin{itemize}
	\item[{\em Step 1.}] We show that $J$ is bounded away from $0$ on $\cD\cap \cM$. Here the Gagliardo-Nirenberg inequality \eqref{eqGN} as well as \eqref{eq:Hstrict} play an important role.
	\item[{\em Step 2.}] $J$ is coercive on $\cD\cap \cM$. Here (A4) and the weak monotonicity of $H(u)/|u|^{2_*}$ is important. We adapt some ideas of \cite{SzulkinWeth, JeanjeanLuNorm}, however we do not require the existence of the continuous projection of $H^1(\R^N)\setminus\{0\}$ onto $\cM$ preserving the $L^2$-norm, so the argument is more delicate.
	\item[{\em Step 3.}] If $(u_n)\subset \cD\cap \cM$ is a minimizing sequence, then by means of the profile decomposition Theorem \ref{ThGerard} (\cite[Theorem 1.4]{MederskiBLproblem}) we may find a sequence of translations $(y_n)\subset \R^N$ such that $u_n(\cdot+y_n)$ weakly and a.e. converges to a minimizer $u$ of $J$ on $\cD\cap \cM$. Here a standard one-step concentration-compactness approach in the spirit of Lions \cite{Lions84}  seem to be insufficient, since $u$ may be outside $\cM$. We need to find a full (possibly infinite) decomposition of $(u_n)$ in order to find a weak limit point in $\cM$ up to a proper translations. If $g$ is odd, then working on the ball $\cD$ allows us to use easily the Schwartz symmetrization and we infer that we may find nonnegative and radially symmetric minimizer. The symmetrization approach directly on $\cS\cap\cM$ seems to be cumbersome even for the simplest particular nonlinearities \eqref{Ex:powernon} as in \cite{BartschSoaveJFA,BartschSoaveJFACorr,Soave,Bellazzini,BartschJS2016}.
	\item[{\em Step 4.}] Next we show that for $v \in (\cD \setminus \cS) \cap \cM$ the following crucial inequality holds
	\begin{equation}\label{eq:SD}
	\inf_{\cS\cap \cM} J < J(v),
	\end{equation}
	thus  the minimizer $u$ of $J$ on $\cD\cap\cM$ is attained in fact in  $\cS\cap\cM$.
	\item[{\em Step 5.}] Analysis of Lagrange multipliers $\lambda$ and $\mu$ for constraints $\cS$ and $\cM$ respectively, implies that $\mu=0$ and we conclude that $u$ is a normalized ground state solution to \eqref{eq}.
\end{itemize}

Observe that, an important consequence of {\em Step 4} is that the {\em ground state energy map}
\begin{equation}\label{gsm}
\rho\mapsto \inf_{\cS\cap\cM}J
\end{equation}
defined for $\rho>0$ satisfying \eqref{eq:Hstrict} is strictly decreasing. For a particular power-type nonlinearity and the Schr\"odinger-Poisson problem in $\R^3$, the monotonicity has been investigated in \cite{BellazziniJeanjean}. For more general nonlinearity in \cite{JeanjeanLuNorm} the authors also proved the strict monotonicity of the ground state energy map. Their proof is technical and uses the existence of the continuous  projection of $H^1(\R^N)\setminus\{0\}$ onto $\cM$ preserving the $L^2$-norm, which seems to be not present in our situation. The crucial inequality \eqref{eq:Hstrict} provides the strict monotonicity immediately.  
In Proposition \ref{prop:con} we show also the continuity of the map and the further properties.

\section{Proof of Theorem \ref{th:main}}

Here and in the sequel $C$ denotes a generic positive constant which may vary from one equation to another.

\begin{Lem}\label{lem:ineq}
	Let $f_1,f_2\in \cC(\R)$ such that $f_1(s) \leq f_2(s)$ and $|f_1(s)|+|f_2(s)|\leq C(|s|^2+|s|^{2^*})$ for any $s\in\R$ and some constant $C>0$.
	Then, $f_1(s)\preceq f_2(s)$ for $s\in\R$ if and only if 
	\begin{equation}\label{eq:f1f2}
	\int_{\R^N}f_1(u)-f_2(u)\,dx<0 
	\end{equation}
	for any $u\in H^1(\R^N)\setminus\{0\}$.
\end{Lem}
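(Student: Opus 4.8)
The plan is to prove both implications by exploiting that $f_1\le f_2$ pins down the sign of the integrand. First I would record a common observation: by the growth bound $|f_1(s)|+|f_2(s)|\le C(|s|^2+|s|^{2^*})$ together with the Sobolev embedding $H^1(\R^N)\hookrightarrow L^2(\R^N)\cap L^{2^*}(\R^N)$, the map $x\mapsto f_1(u(x))-f_2(u(x))$ is integrable for every $u\in H^1(\R^N)$, and since $f_1\le f_2$ pointwise it is nonpositive a.e. Hence $\int_{\R^N} f_1(u)-f_2(u)\,dx\le 0$ always holds, with equality if and only if $f_1(u(x))=f_2(u(x))$ for a.e.\ $x$. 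Both directions then reduce to deciding when this equality can occur for a nontrivial $u$.

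For the implication \eqref{eq:f1f2}$\Rightarrow(f_1\preceq f_2)$ I would argue by contraposition. Since $f_1\le f_2$ is assumed, if $f_1\preceq f_2$ fails then the strict-inequality clause near $0$ must fail, i.e.\ there is $\gamma>0$ with $f_1\equiv f_2$ on $(-\gamma,\gamma)$. Choosing any $\varphi\in \cC_c^\infty(\R^N)\setminus\{0\}$ and setting $u=\eps\varphi$ with $\eps>0$ so small that $|u|_\infty<\gamma$, the range of $u$ lies in $(-\gamma,\gamma)$, so $f_1(u)=f_2(u)$ everywhere and the integral in \eqref{eq:f1f2} vanishes for this nontrivial $u$, contradicting \eqref{eq:f1f2}.

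For the converse, fix $u\in H^1(\R^N)\setminus\{0\}$ and suppose, for contradiction, that the integral equals $0$; by the first paragraph $u$ avoids the set $A:=\{s:\ f_1(s)<f_2(s)\}$ a.e. By continuity of $f_1,f_2$ the set $A$ is open, and by the definition of $\preceq$ it accumulates at $0$. The key step is a measure-theoretic fact about $H^1$ functions: the essential range of a nonzero $u\in H^1(\R^N)$ is a nondegenerate interval $[m,M]$ with $m\le 0\le M$, so on sets of positive measure $u$ attains every value in the interior $(m,M)$. To justify this I would use a truncation argument: for $a<b$, if $|\{u<a\}|>0$ and $|\{u>b\}|>0$ but $|\{a<u<b\}|=0$, then $v:=\operatorname{median}(a,u,b)$ satisfies $\nabla v=\mathbf 1_{\{a<u<b\}}\nabla u=0$ a.e., so $v$ is constant on the connected set $\R^N$, contradicting that $v$ takes both values $a$ and $b$ on sets of positive measure. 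Moreover $u\in L^{2^*}(\R^N)$ forces $|\{|u|>\eps\}|<\infty$, hence $|\{|u|<\eps\}|=\infty$ for every $\eps>0$, so $0$ belongs to the essential range, while $u\neq 0$ places a value of one sign into it; the interval property then fills a one-sided neighborhood of $0$. Granting this, $A$ meets the interior of the essential range in a nonempty open interval $I$, the set $\{x:\ u(x)\in I\}$ has positive measure, and there the integrand is strictly negative, yielding $\int_{\R^N} f_1(u)-f_2(u)\,dx<0$, the desired contradiction.

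The hard part is this forward direction, and within it the delicate point is matching the side of the origin from which $A$ accumulates with a side actually occupied by the range of $u$. For sign-changing $u$ one has $m<0<M$, so $(m,M)$ is a genuine neighborhood of $0$ and automatically meets $A$; for one-signed $u$ one needs $\{f_1<f_2\}$ to accumulate at $0$ from the corresponding side, which is exactly where the structure of the applications is used (for instance when $f_1,f_2$ are even, as for the pair $G,H$ arising from an odd $g$). The remaining verifications — integrability, openness of $A$, and the chain rule/locality identity $\nabla v=\mathbf 1_{\{a<u<b\}}\nabla u$ — are standard.
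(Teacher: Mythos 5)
Your overall strategy coincides with the paper's. The easy implication is handled identically: if the strict-inequality clause of $\preceq$ fails near $0$, then (since $f_1\le f_2$ is assumed) $f_1=f_2$ on a neighbourhood of $0$, and a small multiple of a bump function, whose range lies in that neighbourhood, makes the integral vanish --- the paper does exactly this with an explicit $\varphi\in\cC_0^\infty(\R^N)$. For the hard implication both arguments rest on the same key fact, namely that a nontrivial $u\in H^1(\R^N)$ cannot have a gap in its essential range separating $0$ from the values it actually takes: you prove this via the truncation $v=\min(\max(u,a),b)$ and the locality of the weak gradient, while the paper shows that otherwise a characteristic function $\chi_{\Omega'}$ would satisfy an $L^2$ difference-quotient bound and hence lie in $H^1(\R^N)$, which is impossible. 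These two devices are equivalent in substance, so up to this point your proof is a faithful (and arguably cleaner) rendering of the paper's.

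The issue you flag at the end --- that for a one-signed $u$ the open set $A=\{s: f_1(s)<f_2(s)\}$ must accumulate at $0$ from the side actually occupied by the essential range of $u$ --- is a genuine gap, and you should know that the paper's proof does not close it either. There, $\Omega=\{x\in\esssupp u : |u(x)|\in\bigcup_n I_n\}$ with $I_n\subset(0,1/n)$ is shown to have positive measure, and then $\int_\Omega f_1(u)\,dx<\int_\Omega f_2(u)\,dx$ is asserted; but on the part of $\Omega$ where $u<0$ one only knows $f_1(u)\le f_2(u)$, so the strict inequality does not follow. In fact, with $\preceq$ as defined (the set $\{f_1<f_2\}$ need only accumulate at $0$ from one side), the stated equivalence is false: take $f_2$ continuous with $0<f_2(s)\le s^2$ for $s\ne 0$, let $f_1=f_2$ on $(-\infty,0]$ and $f_1<f_2$ precisely on a union of open intervals $I_n\subset(0,1/n)$, and let $u=-\varphi$ for a nonnegative bump $\varphi$; then $f_1\preceq f_2$ yet $\int_{\R^N}f_1(u)-f_2(u)\,dx=0$. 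So your proof is exactly as complete as the intended one, and you have correctly isolated the point where the lemma needs repair: one must either require the strict inequality in $\preceq$ to occur arbitrarily close to $0$ on both sides, or assume $f_1,f_2$ even (as in the paper's applications with $g$ odd), in which case $f_i(u)=f_i(|u|)$, the accumulation is automatically two-sided, and your argument applied to $|u|$ (or the paper's argument as written) goes through.
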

\begin{proof}
	Suppose that there is a sequence $(s_n)\subset (0,\infty)$ such that $f_1(s_n) < f_2(s_n)$ and $s_n\to 0$ as $n\to\infty$. Note that for any $n$, we find an open interval $I_n\subset (0 ,1/n)$ such that $f_1(s)<f_2(s)$ for $s\in I_n$. We may assume that $I_n$ are pairwise disjoint. Fix $u\in H^1(\R^N)\setminus\{0\}$ and let
	$$\Om:=\Big\{x\in \esssupp u: |u(x)|\in \bigcup_{n\geq 1}I_n\Big\}$$
	and suppose that $|\Om|=0$. Then 
	$$(0,\infty)\setminus  \bigcup_{n\geq 1}I_n=\bigcup_{n\geq 1}J_n$$ is a union of closed and disjoint intervals $J_n$. Note that
	$\big\{x\in \esssupp u: |u(x)|\in J_{n_0}\big\}$ has a positive measure for some $n\geq 1$. We choose $J_{n_0}$ with the largest left endpoint $a:=\inf J_{n_0}$ such that 	$\Omega':=\big\{x\in \esssupp u: |u(x)|\in J_n\big\}$  has a positive measure.
	 Let $b:=\sup\{s< a: s\in\bigcup_{n\geq 1}J_n\}$. Observe that $0<b<a$ and 
	 \begin{equation}\label{eqchi}
	 \int_{\R^N}|u(x+h)-u(x)|^2\,dx\geq (a-b)^2 \int_{\R^N}|\chi_{\Om'}(x+h)-\chi_{\Om'}(x)|^2\,dx
	 \end{equation}
	for a.e. $h\in\R^N$,
	where $\chi_{\Om'}$ is the characteristic function of $\Om'$. Indeed, note that $$|\chi_{\Om'}(x+h)-\chi_{\Om'}(x)|>0$$ if and only if $x+h\in \Om'$ and $x\notin\Omega'$, or $x+h\notin \Om'$ and $x\in\Omega'$. If the latter conditions hold, then $|u(x+h)|\geq a$ and $|u(x)|\leq b$, or $|u(x+h)|\leq b$ and $|u(x)|\geq a$. Then we obtain \eqref{eqchi}.	
	In view of \cite{Ziemer}[Theorem 2.1.6] we infer that $\chi_{\Om'}\in H^1(\R^N)$ and we get the contradiction, thus $|\Om|>0$. Therefore
	$$
	\int_{\R^N} f_1 (u) \, dx = \int_{\R^N \setminus \Om} f_1 (u) \, dx + \int_\Om f_1 (u) \, dx < \int_{\R^N \setminus \Om} f_2 (u) \, dx + \int_\Om f_2 (u) \, dx = \int_{\R^N} f_2 (u) \, dx.
	$$
	Now suppose that there is a sequence $(s_n)\subset (-\infty,0)$ such that $f_1(s_n) < f_2(s_n)$ and $s_n\to 0$ as $n\to\infty$. Then $f_1(-(-s_n)) < f_2(-(-s_n))$, $(-s_n)\subset (0,\infty)$ and by the above proof applied to $f_1(-\cdot)$, $f_2(-\cdot)$ and $-u$ we obtain  
	$$
	\int_{\R^N} f_1 (u) \, dx=	\int_{\R^N} f_1 (-(-u)) \, dx < 	\int_{\R^N} f_2 (-(-u)) \, dx =\int_{\R^N} f_2 (u) \, dx.
	$$
	Therefore \eqref{eq:f1f2} holds provided that  $f_1(s)\preceq f_2(s)$ for $s\in\R$.
	
	On the other hand, suppose by contradiction that \eqref{eq:f1f2} holds for every $u \in H^1 (\R^N)\setminus\{0\}$, $f_1(s) \leq f_2(s)$ for all $s \in \R$ and there is an open interval $I \subset \R$ such that $0 \in \overline{I}$ and $f_1(s) = f_2(s)$ on $I$. We may assume that $\sup I>0$. Take $a>0$  such that $a \in I$ and let
	$$
	\varphi(x) := a \exp \left( - \frac{|x|^2}{1-|x|^2} \right) \chi_{[0,1)}(|x|), \quad x \in \R^N.
	$$
	Then $\varphi \in \cC_0^\infty (\R^N) \subset H^1 (\R^N)$ is such that $\varphi (\R^N) \subset \overline{I}$. Hence $f_1(\varphi(x))=f_2(\varphi(x))$ for all $x \in \R^N$ and
	$$
	\int_{\R^N} f_1 (\varphi) - f_2(\varphi) \, dx  = 0,
	$$
	and we obtain a contradiction with \eqref{eq:f1f2}.
\end{proof}

In view of (A6) and arguing as in \cite[page 325]{BerLions}, for any $R > 0$ one can find a radial function $u \in H^1_0 (B(0,R)) \cap L^\infty (B(0,R))$ such that $\int_{\R^N} H(u) \, dx > 0$. Then $u(r(u)\cdot)\in\cM$ for
\begin{equation}\label{def:r}
r(u) := \left( \frac{\frac{N}{2} \int_{\R^N} H(u) \, dx}{\int_{\R^N} |\nabla u|^2 \, dx} \right)^{1/2},
\end{equation}
so that $\cM$ is nonempty.

\begin{Lem}\label{lem1} 
	Suppose that (A0), (A1), (A3), (A5), (A6) and \eqref{eq:Hstrict} are satisfied. There holds
	$$
	\inf_{u \in \cD \cap \cM} |\nabla u|_2 > 0.
	$$
\end{Lem}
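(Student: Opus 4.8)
The plan is to argue by contradiction. Suppose there is a sequence $(u_n)\subset\cD\cap\cM$ with $|\nabla u_n|_2\to 0$ (note that each $u_n\neq 0$ forces $|\nabla u_n|_2>0$, since a nonzero $H^1(\R^N)$-function cannot be constant). Since $u_n\in\cM$, the identity $M(u_n)=0$ reads
$$
|\nabla u_n|_2^2=\frac{N}{2}\int_{\R^N}H(u_n)\,dx,
$$
so the whole argument reduces to bounding $\int_{\R^N}H(u_n)\,dx$ from above by a quantity controlled by $|\nabla u_n|_2$, and then exhibiting that the resulting inequality is incompatible with \eqref{eq:Hstrict} once $|\nabla u_n|_2$ is small.

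First I would record a pointwise estimate on $G$. From (A1) near $0$ and from (A3) together with the continuity in (A0) away from $0$, for every $\eps>0$ there is $C_\eps>0$ with
$$
G(s)\le(\eta+\eps)|s|^{2_*}+C_\eps|s|^{2^*},\qquad s\in\R,
$$
which is a genuine pointwise bound valid for all $s$ (the $|s|^{2_*}$-term dominates near $0$, the $|s|^{2^*}$-term away from $0$). The right-hand inequality in (A5) gives $H(s)\le(2^*-2)G(s)$, hence
$$
\int_{\R^N}H(u_n)\,dx\le(2^*-2)\Big[(\eta+\eps)\int_{\R^N}|u_n|^{2_*}\,dx+C_\eps\int_{\R^N}|u_n|^{2^*}\,dx\Big].
$$

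Then I would apply the Gagliardo–Nirenberg inequality \eqref{eqGN}. The crucial point is that for $p=2_*$ the exponent is $\delta=N\big(\tfrac12-\tfrac{1}{2_*}\big)=\tfrac{N}{N+2}$, so that $2_*\delta=2$ and $2_*(1-\delta)=\tfrac4N$; using $u_n\in\cD$, i.e. $|u_n|_2^2\le\rho$, this yields
$$
\int_{\R^N}|u_n|^{2_*}\,dx\le C_{N,2_*}^{2_*}|\nabla u_n|_2^2\,|u_n|_2^{4/N}\le C_{N,2_*}^{2_*}\rho^{2/N}|\nabla u_n|_2^2,
$$
while the Sobolev inequality (the case $p=2^*$, $\delta=1$) gives $\int_{\R^N}|u_n|^{2^*}\,dx\le C|\nabla u_n|_2^{2^*}$. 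Substituting these into the identity above and using the exact arithmetic identity $\tfrac{N}{2}(2^*-2)=2^*$ produces
$$
|\nabla u_n|_2^2\le 2^*(\eta+\eps)C_{N,2_*}^{2_*}\rho^{2/N}|\nabla u_n|_2^2+C|\nabla u_n|_2^{2^*}.
$$

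Finally I would divide by $|\nabla u_n|_2^2>0$ and let $n\to\infty$: since $2^*>2$ the term $|\nabla u_n|_2^{2^*-2}\to 0$ is harmless and drops out, leaving $1\le 2^*(\eta+\eps)C_{N,2_*}^{2_*}\rho^{2/N}$; letting $\eps\to 0$ then contradicts \eqref{eq:Hstrict}. The main obstacle here is not any deep estimate but the precise bookkeeping of constants, so that exactly the quantity $2^*\eta C_{N,2_*}^{2_*}\rho^{2/N}$ appears in the limit. This hinges on the two exact identities $2_*\delta=2$ and $\tfrac{N}{2}(2^*-2)=2^*$, which make the Gagliardo–Nirenberg and Pohozaev–Nehari scalings match, while the superlinear $|\nabla u_n|_2^{2^*}$-contribution is discarded precisely because it is of strictly higher order in $|\nabla u_n|_2$.
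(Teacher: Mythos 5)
Your proof is correct and takes essentially the same approach as the paper: bound $H\le(2^*-2)G$, split $G$ pointwise using (A1) near $0$ and (A3) with (A0) elsewhere, and apply the mass-critical Gagliardo--Nirenberg inequality to isolate the coefficient $2^*\eta C_{N,2_*}^{2_*}\rho^{2/N}$, with the remaining contribution of strictly higher order in $|\nabla u|_2$. The only differences are cosmetic: the paper uses a three-term splitting with an intermediate exponent $p\in(2_*,2^*)$ and argues directly rather than by contradiction along a sequence.
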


\begin{proof} Take any $2+\frac{4}{N} < p < 2^*$.
In view of (A1), (A3) and (A5) for any $\eps>0$ there is $c_\eps>0$ such that
\begin{equation*}
	H(u) \leq (2^*-2)G(u) \leq (2^*-2) \left( \varepsilon |u|^{2^*} + (\eps+\eta) |u|^{2+\frac4N}  + c_\varepsilon |u|^p \right)
\end{equation*}
for any $u\in\R$. From the Gagliardo-Nirenberg inequality
	$$
	|u|_p \leq C_{N,p} |\nabla u|_2^\delta |u|_2^{1-\delta} \leq C_{N,p} \rho^{\frac{1-\delta}{2}} |\nabla u|_2^\delta,
	$$
	where $\delta = N \big( \frac{1}{2} - \frac{1}{p} \big)$. Note that
	$$
	\delta p =N \left( \frac{p}{2} - 1 \right) > N \left( 1+\frac{2}{N}-1\right) = 2.
	$$
	Since $u \in \cD \cap \cM$, we get
	\begin{eqnarray*}
	|\nabla u|_2^2 = \frac{N}{2}   \int_{\R^N} H(u) \, dx &\leq& \frac{N}{2} (2^*-2) \left( \varepsilon \left( |u|_{2^*}^{2^*} + |u|_{2+\frac4N}^{2+\frac4N} \right) +\eta  |u|_{2+\frac4N}^{2+\frac4N}+  c_\varepsilon C^p_{N,p}  \rho^{\frac{(1-\delta)p}{2}}  |\nabla u|_2^{\delta p} \right)\\
	&=& 2^* \left( \varepsilon \left( |u|_{2^*}^{2^*} + |u|_{2+\frac4N}^{2+\frac4N} \right) +\eta  |u|_{2+\frac4N}^{2+\frac4N}+  c_\varepsilon C^p_{N,p}  \rho^{\frac{(1-\delta)p}{2}}  |\nabla u|_2^{\delta p} \right) \\
	&\leq& \varepsilon C\big(
	|\nabla u|_2^{2^*}+|\nabla u|_2^{2}\big)
	+Cc_\eps|\nabla u|_2^{\delta p}+2^*\eta C_{N,2_*}^{2_*} \rho^{\frac{2}{N}} |\nabla u|_2^2 \\
	&=& \varepsilon C | \nabla u|_2^{2^*} + C c_\eps |\nabla u|_2^{\delta p} + \left( \eps C + 2^* \eta C_{N,2_*}^{2_*} \rho^{\frac{2}{N}} \right) |\nabla u|_2^2
	\end{eqnarray*}
	for a constant $C>0$. Taking $\eps <\frac{1}{C} \left(1-2^* \eta C_{N,2_*}^{2_*} \rho^{\frac{2}{N}} \right)$ we obtain that
	$|\nabla u|_2^2$ is bounded away from $0$ on $\cD\cap\cM$ provided that 
	$$2^*\eta C_{N,2_*}^{2_*} \rho^{\frac{2}{N}} <1.$$

\end{proof}

Let $u\in H^1(\R^N)\setminus\{0\}$ be such that
\begin{equation}\label{ineq:2}
2 \eta C_{N,2_*}^{2_*} \left( \int_{\R^N} |u|^2 \, dx \right)^{2/N} < 1.
\end{equation}
Define $$\vp(\lambda):=J(\lambda^{\frac{N}{2}} u(\lambda \cdot)),\quad \lambda\in (0,\infty).$$ 
In particular we can consider $u \in \cD$ such that \eqref{eq:Hstrict} holds or $u \in H^1 (\R^N) \setminus \{0\}$ if $\eta = 0$.

\begin{Lem}\label{lem:phi}
Suppose that $u\in H^1(\R^N)\setminus\{0\}$ satisfies \eqref{ineq:2}. Assume moreover that (A0), (A1), (A3)--(A6) hold. Then there is an interval $[a,b]\subset (0,\infty)$ such that $\vp$ is constant 
on $[a,b]$, $\lambda^{\frac{N}{2}} u(\lambda \cdot)\in\cM$  and $\vp(\lambda)\geq \vp(\lambda')$ for any $\lambda\in [a,b]$ and $\lambda'\in (0,\infty)$ and the strict inequality holds for  $\lambda'\in (0,\infty)\setminus [a,b]$. Moreover if $u\in\cM$, then $1\in [a,b]$.
\end{Lem}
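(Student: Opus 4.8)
The plan is to reduce everything to the one–variable function $\vp$ and to study the sign of its derivative. First I would record the effect of the mass–preserving scaling $u_\lambda:=\lambda^{N/2}u(\lambda\,\cdot)$: one has $|u_\lambda|_2=|u|_2$ (so \eqref{ineq:2} is scaling invariant), $|\nabla u_\lambda|_2^2=\lambda^2|\nabla u|_2^2$, and, after the substitution $y=\lambda x$,
\[
\vp(\lambda)=\frac{\lambda^2}{2}|\nabla u|_2^2-\lambda^{-N}\int_{\R^N}G(\lambda^{N/2}u)\,dx .
\]
Differentiating (the growth bounds (A0), (A3) justify differentiation under the integral sign and all integrability used below) gives the identity
\[
\lambda\,\vp'(\lambda)=M(u_\lambda)=\lambda^2|\nabla u|_2^2-\frac{N}{2}\lambda^{-N}\int_{\R^N}H(\lambda^{N/2}u)\,dx .
\]
Thus $\vp'(\lambda)=\lambda\,F(\lambda)$ with $F(\lambda):=|\nabla u|_2^2-\frac N2\,\omega(\lambda)$ and $\omega(\lambda):=\lambda^{-N-2}\int_{\R^N}H(\lambda^{N/2}u)\,dx$, and—crucially—$\vp'(\lambda)=0$ if and only if $u_\lambda\in\cM$. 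The statement then becomes a sign analysis of the continuous function $F$.

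The second step is to show that $\omega$ is non-decreasing, which is where (A4) enters. Writing $\omega(\lambda)=\int_{\R^N}\frac{H(\lambda^{N/2}u)}{|\lambda^{N/2}u|^{2_*}}\,|u|^{2_*}\,dx$ (note $|\lambda^{N/2}u|^{2_*}=\lambda^{N+2}|u|^{2_*}$), I would observe that (A4) is equivalent, via $h=H'$, to the monotonicity of $s\mapsto H(s)/|s|^{2_*}$ in $|s|$ on each half-line; hence for fixed $x$ the integrand is non-decreasing in $\lambda$, and so is $\omega$. Consequently $F$ is non-increasing and continuous (dominated convergence), so it changes sign at most once and $\{F=0\}$ is a closed interval.

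The two endpoint limits are the heart of the matter. As $\lambda\to\infty$ I would use (A2) together with the lower bound in (A5): since $H(s)/|s|^{2_*}\ge\frac4N\,G(s)/|s|^{2_*}\to\infty$, Fatou's lemma gives $\omega(\lambda)\to\infty$, so $F(\lambda)\to-\infty$. The delicate endpoint is $\lambda\to0^+$, where the sharp constant of \eqref{ineq:2} must appear. By monotone convergence $\omega(\lambda)\to\ell_0\,|u|_{2_*}^{2_*}$, where $\ell_0:=\lim_{|s|\to0}H(s)/|s|^{2_*}$ exists by the monotonicity just proved. I would then relate $\ell_0$ to $\eta$: from $\big(G(s)/s^2\big)'=H(s)/s^3$ together with $G(s)/s^2\to0$, integrating $H(s)/s^3=(\ell_0+o(1))s^{4/N-1}$ near $0$ yields $G(s)/|s|^{2_*}\to\frac N4\ell_0$, so $\frac N4\ell_0=\eta_0\le\eta$ (with $\eta_0$ the one–sided limit, bounded by $\eta$ for either sign). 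Hence $\frac N2\ell_0=2\eta_0\le2\eta$, and combining with the Gagliardo–Nirenberg inequality \eqref{eqGN} at $p=2_*$ (for which $\delta\,2_*=2$) and with \eqref{ineq:2},
\[
\tfrac N2\,\ell_0\,|u|_{2_*}^{2_*}\le 2\eta\,C_{N,2_*}^{2_*}\,|u|_2^{4/N}\,|\nabla u|_2^2<|\nabla u|_2^2,
\]
so $F(0^+)>0$ (here $\nabla u\neq0$ since $u\in H^1(\R^N)\setminus\{0\}$). Matching the factor $2$ exactly is the step I expect to be the main obstacle.

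Finally I would assemble the picture. Being continuous, non-increasing, positive near $0$ and tending to $-\infty$, the function $F$ vanishes exactly on a compact interval $[a,b]\subset(0,\infty)$, with $F>0$ on $(0,a)$ and $F<0$ on $(b,\infty)$. Therefore $\vp$ is $\cC^1$ with $\vp'=\lambda F$ strictly positive on $(0,a)$, zero on $[a,b]$, and strictly negative on $(b,\infty)$, so $\vp$ is strictly increasing, then constant, then strictly decreasing; this yields $\vp(\lambda)\ge\vp(\lambda')$ for all $\lambda\in[a,b]$, $\lambda'\in(0,\infty)$, with strict inequality for $\lambda'\notin[a,b]$. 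Since $\vp'(\lambda)=0\iff u_\lambda\in\cM$, we get $\lambda^{N/2}u(\lambda\,\cdot)\in\cM$ precisely for $\lambda\in[a,b]$. For the last assertion, if $u\in\cM$ then $M(u)=M(u_1)=\vp'(1)=F(1)=0$, whence $1\in[a,b]$.
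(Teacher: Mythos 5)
Your proof is correct and shares the paper's overall skeleton: both reduce the lemma to a sign analysis of $\vp'(\lambda)=\lambda\big(|\nabla u|_2^2-\tfrac N2\omega(\lambda)\big)$ with $\omega(\lambda)=\int_{\R^N}\frac{H(\lambda^{N/2}u)}{|\lambda^{N/2}u|^{2_*}}|u|^{2_*}\,dx$, both get the monotonicity of $\omega$ from (A4) and the divergence $\omega(\lambda)\to\infty$ from (A2) and (A5), and both conclude that the critical set is a compact interval $[a,b]$ on which $\vp$ attains its maximum. Where you genuinely diverge is the endpoint $\lambda\to0^+$. The paper never evaluates $F(0^+)$; it shows $\vp(\lambda)\to0$ and, via an $\eps$-splitting of $G$ combined with \eqref{eqGN} and \eqref{ineq:2}, that $\vp(\lambda)>0$ for small $\lambda$, which forces an interior maximum and hence $a>0$. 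You instead compute the limit $F(0^+)=|\nabla u|_2^2-\tfrac N2\,\ell_0\,|u|_{2_*}^{2_*}$ directly, using that $\ell_0=\lim_{s\to0}H(s)/|s|^{2_*}$ exists by the monotonicity coming from (A4), and you transfer \eqref{ineq:2} (which is phrased through $\eta$, i.e.\ through $G$) to $\ell_0$ via the identity $\lim_{s\to0^{\pm}}G(s)/|s|^{2_*}=\tfrac N4\ell_0^{\pm}$, obtained by integrating $(G(s)/s^2)'=H(s)/s^3$. That identity is correct, is a nice by-product of (A4) not recorded in the paper, and does reproduce the factor $2$ in \eqref{ineq:2} exactly; what it buys you is a cleaner, purely derivative-based argument (no need to discuss the values of $\vp$ near $0$), at the price of the extra limit computation. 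Two minor points: the one-sided limits $\ell_0^{\pm}$ may differ, so the limit of $\omega$ is $\ell_0^{+}\int_{\{u>0\}}|u|^{2_*}dx+\ell_0^{-}\int_{\{u<0\}}|u|^{2_*}dx$, but since each $\tfrac N4\ell_0^{\pm}$ is bounded by $\eta=\limsup_{|s|\to0}G(s)/|s|^{2_*}$ your estimate goes through unchanged (you flag this yourself); and, exactly as in the paper's own proof, you invoke (A2) for the behaviour at infinity although (A2) is not listed among the lemma's hypotheses --- this is an inconsistency in the statement of the lemma, not a gap in your argument.
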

\begin{proof}
Fix $u \in H^1(\R^N)\setminus\{0\}$ satisfying \eqref{ineq:2}. Observe that, from (A1),
$$
\varphi(\lambda) = \frac{\lambda^2}{2} \int_{\R^N} |\nabla  u|^2 \, dx - \int_{\R^N} \frac{G(\lambda^{N/2}u)}{\lambda^N} \, dx  \to 0
$$
as $\lambda \to 0^+$. Let $R := |u|_2^2 = \left| \lambda^{N/2} u(\lambda \cdot) \right|_2^2 > 0$. Note that, from (A1), (A3), (A5) and the Gagliardo-Nirenberg inequality, for every $\varepsilon > 0$ there is $c_\eps > 0$ such that
\begin{equation*}
	\int_{\R^N} G(u) \, dx \leq  (\eps + \eta) |u|^{2+\frac4N}_{2+\frac4N} + c_\eps |u|^{2^*}_{2^*} \leq (\eps + \eta) C_{N,2_*}^{2_*} |\nabla u|_2^2 R^{\frac2N} + C_\eps C |\nabla u|_2^{2^*}.
\end{equation*}
Hence
\begin{align*}
\frac{\varphi(\lambda)}{\lambda^2} &= \frac{1}{2} \int_{\R^N} |\nabla  u|^2 \, dx - \frac{1}{\lambda^2} \int_{\R^N} G \left( \lambda^{\frac{N}{2}} u(\lambda x) \right) \, dx \\
&\geq \frac12 |\nabla u|_2^2 - (\eps + \eta) C_{N,2_*}^{2_*} |\nabla u|_2^2 R^{\frac2N} - C_\eps C \lambda^{2^*-2} |\nabla u|_2^{2^*}\\
& = \frac{|\nabla u|_2^2}{2} \left(1 - 2 (\eta+\eps) C_{N,2_*}^{2_*} R^{\frac2N} \right) + o(1)
\end{align*}
as $\lambda \to 0^+$, and $\varphi(\lambda) > 0$ for sufficiently small $\lambda > 0$.
Moreover from (A2) there follows that
$$
\frac{\varphi(\lambda)}{\lambda^2} = \frac12 \int_{\R^N} |\nabla  u|^2 \, dx - \int_{\R^N} \frac{G(\lambda^{N/2}u)}{ \left(\lambda^{N/2} \right)^{2_*} } \, dx \to -\infty
$$
as $\lambda \to \infty$. Hence $\varphi$ has a maximum at some $\lambda_0 > 0$. In particular $\varphi'(\lambda_0) = 0$, so that
$$0=\vp'(\lambda_0)=\lambda_0 \Big(\int_{\R^N}|\nabla u|^2\,dx-\frac{N}{2}\int_{\R^N}H(\lambda_0^{N/2} u)(\lambda_0^{N/2})^{-2_*}\,dx\Big)$$
and $\int_{\R^N} \left| \nabla \lambda_0^{\frac{N}{2}} u(\lambda_0 \cdot) \right|^2 \, dx = \frac{N}{2} \int_{\R^N} H\left( \lambda_0^{\frac{N}{2}} u(\lambda_0 \cdot) \right) \, dx.$ Hence $\lambda_0^{\frac{N}{2}} u(\lambda_0 \cdot) \in \cM$. From (A4) there follows that the function
$$
(0, \infty) \ni \lambda \mapsto \int_{\R^N} H(\lambda^{N/2} u)(\lambda^{N/2})^{-2_*} \, dx \in \R
$$
is nonincreasing. Moreover, from (A2) and (A5), $\int_{\R^N} H(\lambda^{N/2} u)(\lambda^{N/2})^{-2_*} \, dx \to \infty$ as $\lambda \to \infty$. Hence there is an inteval $[a,b]$ such that $\varphi'(\lambda) = 0$ for $\lambda \in [a,b]$. In particular, $\lambda^{\frac{N}{2}} u(\lambda \cdot)\in\cM$  and $\vp(\lambda)\geq \vp(\lambda')$ for any $\lambda\in [a,b]$ and $\lambda'\in (0,\infty)$ and the strict inequality holds for $\lambda'\in (0,\infty)\setminus [a,b]$. Since $\varphi'(\lambda) = 0$ for $\lambda \in [a,b]$ then $\vp$ is constant on $[a,b]$. If, in addition, $u \in \cM$, then
$$
\int_{\R^N} |\nabla u|^2 \, dx = \frac{N}{2} \int_{\R^N} H(u) \, dx,
$$
so that $\varphi'(1) = 0$ and $1 \in [a,b]$.
\end{proof}

\begin{Lem}\label{lemCoercive}
Suppose that (A0)--(A5) and \eqref{eq:Hstrict} hold. Then $J$ is coercive on $\cD \cap \cM$.
\end{Lem}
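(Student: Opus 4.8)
The plan is to argue by an $L^2$-preserving rescaling combined with a concentration--compactness dichotomy. Since every $u \in \cD$ satisfies $|u|_2^2 \leq \rho$, the $L^2$-part of the $H^1$-norm is bounded on $\cD \cap \cM$, so coercivity reduces to showing $J(u_n) \to \infty$ whenever $(u_n) \subset \cD \cap \cM$ with $t_n := |\nabla u_n|_2 \to \infty$. I would first record that $J \geq 0$ on $\cM$: if $u \in \cM$ then $|\nabla u|_2^2 = \frac{N}{2} \int_{\R^N} H(u)\,dx$, hence $J(u) = \frac{N}{4} \int H(u) - \int G(u) \geq \int \big( \frac{N}{4} \cdot \frac{4}{N} G(u) - G(u) \big) = 0$ by the lower bound in (A5). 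This nonnegativity will be the lever that rules out one of the two cases below.

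Next I would normalize the gradient: set $v_n := t_n^{-N/2} u_n(\cdot/t_n)$, so that $|\nabla v_n|_2 = 1$ and $|v_n|_2 = |u_n|_2 \leq \sqrt{\rho}$; thus $(v_n)$ is bounded in $H^1(\R^N)$ and, like $u_n$, satisfies \eqref{ineq:2} because \eqref{eq:Hstrict} holds. A change of variables gives $s^{N/2} v_n(s\cdot) = (s/t_n)^{N/2} u_n\big((s/t_n)\cdot\big)$, so applying Lemma \ref{lem:phi} to $u_n \in \cM$ (for which $1 \in [a,b]$ realizes the maximum of $\varphi$) yields the key inequality
\[
J(u_n) \ \geq\ J\big(s^{N/2} v_n(s\cdot)\big) = \frac{s^2}{2} - s^{-N} \int_{\R^N} G\big(s^{N/2} v_n\big)\,dx \qquad \text{for every } s > 0.
\]
I would then invoke the dichotomy for $(v_n)$ (through the profile decomposition of Theorem \ref{ThGerard}, or simply Lions' vanishing lemma). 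If $(v_n)$ does \emph{not} vanish, there are $y_n \in \R^N$ with $v_n(\cdot + y_n) \rightharpoonup v \neq 0$ in $H^1$ and a.e. Using the homogeneity balance $\frac{N}{2} \cdot 2_* = N+2$, I rewrite $\frac{1}{t_n^2} \int G(u_n)\,dx = \int \frac{G(t_n^{N/2} v_n(\cdot+y_n))}{|t_n^{N/2} v_n(\cdot+y_n)|^{2_*}}\, |v_n(\cdot+y_n)|^{2_*}\,dx$; on the positive-measure set $\{v \neq 0\}$ the first factor tends to $+\infty$ by (A2) (since $t_n^{N/2}|v_n| \to \infty$) while the second tends to $|v|^{2_*} > 0$, so Fatou forces $\frac{1}{t_n^2}\int G(u_n) \to \infty$, i.e. $J(u_n)/t_n^2 \to -\infty$ and $J(u_n) \to -\infty$, contradicting $J \geq 0$. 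Hence $(v_n)$ vanishes and $|v_n|_q \to 0$ for every $q \in (2, 2^*)$, in particular for $q = 2_*$.

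Finally, in the vanishing case I would estimate the remainder in the key inequality. By (A1) and (A3), for each $\eps > 0$ there is $C_\eps > 0$ with $G(w) \leq \eps |w|^{2^*} + C_\eps |w|^{2_*}$, whence $\int G(s^{N/2} v_n) \leq \eps\, s^{N 2^*/2} |v_n|_{2^*}^{2^*} + C_\eps\, s^{N 2_*/2} |v_n|_{2_*}^{2_*}$; since $|v_n|_{2^*}$ is bounded (Sobolev) and $|v_n|_{2_*} \to 0$, letting $n \to \infty$ and then $\eps \to 0$ gives $\int G(s^{N/2} v_n) \to 0$ for each fixed $s$. Thus $\liminf_n J(u_n) \geq s^2/2$ for every $s > 0$, and letting $s \to \infty$ yields $J(u_n) \to \infty$, as required. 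The main obstacle is the non-vanishing case: the Fatou argument is delicate and relies crucially on the exact scaling identity $\frac{N}{2} 2_* = N+2$ together with the mass-critical superlinearity (A2), and on having $J \geq 0$ on $\cM$ (from (A5)) to convert $J(u_n) \to -\infty$ into a genuine contradiction; the vanishing case is then routine once the endpoint $q = 2^*$ is absorbed by the small parameter $\eps$ via (A3).
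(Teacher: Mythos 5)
Your proof is correct and follows essentially the same route as the paper's: the same nonnegativity of $J$ on $\cM$ via (A5), the same gradient-normalizing rescaling $v_n$, the same use of Lemma \ref{lem:phi} to get $J(u_n)\geq J\big(s^{N/2}v_n(s\cdot)\big)$, the same Fatou/(A2) contradiction in the non-vanishing case, and the same Lions-lemma vanishing argument to conclude. The only differences are cosmetic (you conclude $J(u_n)\to\infty$ directly rather than by contradiction, and you spell out the $\eps|w|^{2^*}+C_\eps|w|^{2_*}$ bound that the paper leaves implicit).
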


\begin{proof}
Observe that for $u \in \cD \cap \cM$, taking (A5) into account, we have
$$
J(u) = J(u) - \frac12 M(u) = \frac{N}{4} \int_{\R^N} H(u) - \frac{4}{N} G(u) \, dx \geq 0.
$$
Hence $J$ is bounded from below on $\cD \cap \cM$.
Now we follow similar arguments as in \cite[Lemma 2.5]{JeanjeanLuNorm},  \cite[Proposition 2.7]{SzulkinWeth}.
Suppose that $(u_n) \subset\cD \cap \cM$ is a sequence such that $\|u_n\| \to \infty$ and $J(u_n)$ is bounded from above. Since $u_n \in \cD$ we see that $|\nabla u_n|_2^2 \to \infty$. Put
$$
\lambda_n := \frac{1}{|\nabla u_n|_2} > 0
$$
and define
$$
v_n := \lambda_n^{N/2} u_n \left( \lambda_n \cdot \right).
$$
Note that $\lambda_n \to 0^+$ as $n \to \infty$. Then 
$$
\int_{\R^N} |v_n|^2 \, dx = \int_{\R^N} |u_n|^2 \, dx \leq \rho,
$$
so that $v_n \in \cD$. Moreover
$$
|\nabla v_n |_2^2 = \int_{\R^N} |\nabla v_n|^2 \, dx = \lambda_n^{N} \lambda_n^{-N+2} \int_{\R^N} |\nabla u_n|^2 \, dx = \lambda_n^{2} \lambda_n^{-2} = 1.
$$
In particular, $(v_n)$ is bounded in $H^1 (\R^N)$. Suppose that 
$$
\limsup_{n \to \infty} \left( \sup_{y \in \R^N} \int_{B(y,1)} |v_n|^2 \, dx \right) > 0.
$$
Then, up to a subsequence, we can find translations $(z_n) \subset \R^N$ such that
$$
v_n (\cdot + z_n) \weakto v \neq 0 \hbox{ in } H^1 (\R^N)
$$
and $v_n(x+z_n) \to v(x)$ for a.e. $x \in \R^N$. Then by (A2)
\begin{align*}
0 \leq \frac{J(u_n)}{|\nabla u_n|_2^2} &= \frac12 - \int_{\R^N} \frac{G(u_n)}{|\nabla u_n|_2^2} \, dx = \frac12 - \lambda_n^{N} \lambda_n^2 \int_{\R^N} G(u_n(\lambda_n x)) \, dx \\
&= \frac12 - \lambda_n^{N+2} \int_{\R^N} G( \lambda_n^{-N/2} v_n ) \, dx = \frac12 - \lambda_n^{N+2} \int_{\R^N} \frac{G(\lambda_n^{-N/2} v_n)}{ \left| \lambda_n^{-N/2} v_n \right|^{2+ \frac{4}{N}} } \left| \lambda_n^{-N/2} v_n \right|^{2+ \frac{4}{N}} \, dx \\
&= \frac12 -  \int_{\R^N} \frac{G(\lambda_n^{-N/2} v_n)}{ \left| \lambda_n^{-N/2} v_n \right|^{2+ \frac{4}{N}} } \left|  v_n \right|^{2+ \frac{4}{N}} \, dx \\
&= \frac12 -  \int_{\R^N} \frac{G(\lambda_n^{-N/2} v_n(x+z_n))}{ \left| \lambda_n^{-N/2} v_n(x+z_n) \right|^{2+ \frac{4}{N}} } \left|  v_n(x+z_n) \right|^{2+ \frac{4}{N}} \, dx  \to -\infty
\end{align*}
and we obtain a contradiction. Hence we may assume that
$$
\sup_{y \in \R^N} \int_{B(y,1)} |v_n|^2 \, dx \to 0
$$
and from Lion's lemma $v_n \to 0$ in $L^{2 + \frac{4}{N}} (\R^N)$. Observe that 
$$
u_n = \lambda_n^{-N/2} v_n \left( \frac{\cdot}{\lambda_n} \right) \in \cM.
$$
Since $u_n = \lambda_n^{-N/2} v_n \left( \frac{\cdot}{\lambda_n} \right) \in \cD$ and \eqref{eq:Hstrict} holds, $u_n$ satisfies also \eqref{ineq:2}. Hence, from Lemma \ref{lem:phi}, for any $\lambda >0$ there holds
\begin{align*}
J(u_n) = J \left( \lambda_n^{-N/2} v_n \left( \frac{\cdot}{\lambda_n} \right) \right) \geq J \left( \lambda^{N/2} v_n (\lambda \cdot) \right) = \frac{\lambda^2}{2} - \lambda^{-N} \int_{\R^N} G \left( \lambda^{N/2} v_n \right) \, dx
\end{align*}
and $\lambda^{-N} \int_{\R^N} G \left( \lambda^{N/2} v_n \right) \, dx \to 0$ as $n \to \infty$. Thus we obtain a contradiction by taking sufficiently large $\lambda > 0$.
\end{proof}

\begin{Lem} Suppose that (A0), (A1), (A3)--(A6) and \eqref{eq:Hstrict} hold. There holds
	$$
	c:=\inf_{\cD \cap \cM} J > 0
	$$
\end{Lem}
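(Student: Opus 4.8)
The plan is to exploit the scaling structure captured by Lemma \ref{lem:phi}, which reduces the problem from bounding $J$ at the given function to evaluating it at an optimally chosen scale. First I would fix an arbitrary $u\in\cD\cap\cM$. Since $u\in\cD$ we have $|u|_2^2\le\rho$, and because \eqref{eq:Hstrict} holds with $2<2^*$ the inequality \eqref{ineq:2} is satisfied by $u$; moreover $u\in\cM$ forces $u\ne 0$, hence $|\nabla u|_2>0$. Thus Lemma \ref{lem:phi} applies, and since $u\in\cM$ we have $1\in[a,b]$, so that $\varphi(\lambda):=J(\lambda^{N/2}u(\lambda\cdot))$ attains its maximum at $\lambda=1$; in particular
\[
J(u)=\varphi(1)\ge\varphi(\lambda)\qquad\text{for every }\lambda>0.
\]
The whole strategy is then to lower bound the right-hand side by a constant independent of $u$ through a judicious choice of $\lambda$.

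Next I would produce a uniform lower bound for $\varphi$. From (A1), (A3) and (A5) one obtains, for each $\eps>0$, a constant $c_\eps>0$ with $G(s)\le(\eta+\eps)|s|^{2_*}+c_\eps|s|^{2^*}$ for all $s\in\R$; combined with the Gagliardo--Nirenberg inequality \eqref{eqGN} (so that $|v|_{2_*}^{2_*}\le C_{N,2_*}^{2_*}|\nabla v|_2^2|v|_2^{4/N}$) and the Sobolev bound $|v|_{2^*}^{2^*}\le C|\nabla v|_2^{2^*}$, applied to $v=\lambda^{N/2}u(\lambda\cdot)$ with $|\nabla v|_2=\lambda|\nabla u|_2$ and $|v|_2=|u|_2\le\sqrt\rho$, this yields
\[
\varphi(\lambda)=J(v)\ge\frac{\theta}{2}\,(\lambda|\nabla u|_2)^2-c_\eps C\,(\lambda|\nabla u|_2)^{2^*},\qquad \theta:=1-2(\eta+\eps)C_{N,2_*}^{2_*}\rho^{2/N}.
\]
By \eqref{eq:Hstrict} and $2<2^*$ the quantity $1-2\eta C_{N,2_*}^{2_*}\rho^{2/N}$ is strictly positive, so $\eps$ can be fixed small enough that $\theta>0$.

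Finally, the right-hand side equals $\psi(\lambda|\nabla u|_2)$ with $\psi(s)=\frac{\theta}{2}s^2-c_\eps C s^{2^*}$. Since $2<2^*$ and $\theta>0$, the function $\psi$ has a strictly positive maximum $m:=\max_{s\ge0}\psi(s)>0$, attained at some $s_0>0$, and $m$ depends only on $\theta,c_\eps,C$, not on $u$. As $|\nabla u|_2>0$, the map $\lambda\mapsto\lambda|\nabla u|_2$ is a bijection of $(0,\infty)$ onto itself, so choosing $\lambda=s_0/|\nabla u|_2$ gives $J(u)\ge\varphi(\lambda)\ge m$. Since $u\in\cD\cap\cM$ was arbitrary, we conclude $c=\inf_{\cD\cap\cM}J\ge m>0$.

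The main obstacle here is conceptual rather than computational: one cannot lower bound $J(u)$ directly at the given function, because $|\nabla u|_2$ is not a priori bounded on $\cD\cap\cM$ and for large gradients the critical term $c_\eps C|\nabla u|_2^{2^*}$ would dominate the favorable quadratic term, destroying any positive bound. The essential device is therefore to use Lemma \ref{lem:phi} to replace $J(u)$ by the maximum of $\varphi$ over the entire scaling orbit, which permits evaluating at the optimal scale $s_0$ where the balance between $\tfrac{\theta}{2}s^2$ and $c_\eps C s^{2^*}$ is favorable; the positivity of $\theta$, i.e. the quantitative interplay of the threshold \eqref{eq:Hstrict} with the optimal constant $C_{N,2_*}$, is precisely what makes the resulting bound genuinely positive. (Note that Lemma \ref{lem1} is not actually needed for this step, since $u\ne0$ already guarantees $|\nabla u|_2>0$; it merely records the same non-degeneracy quantitatively.)
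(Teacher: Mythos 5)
Your proof is correct and follows essentially the same route as the paper: both pass from $J(u)$ to the rescaled function via Lemma \ref{lem:phi}, bound $\int_{\R^N}G$ by the Gagliardo--Nirenberg and Sobolev inequalities so that \eqref{eq:Hstrict} makes the quadratic coefficient positive, and then evaluate at a fixed gradient scale (the paper fixes $|\nabla v|_2=\delta$ where $J(v)\ge\frac{1}{2N}|\nabla v|_2^2$, you maximize $\psi$; these are the same device).
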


\begin{proof}
We will show that for $\rho > 0$ satisfying \eqref{eq:Hstrict} there is $\delta > 0$ such that 
\begin{equation}\label{eq:JJ}
\frac{1}{2N} |\nabla u|^2_2  \leq J(u)
\end{equation}
for $u \in \cD$ such that $|\nabla u|_2 \leq \delta$. From the Gagliardo-Nirenberg inequality we obtain
\begin{align*}
\int_{\R^N} G(u) \, dx &\leq (\eps + \eta) |u|_{2+\frac{4}{N}}^{2+\frac{4}{N}} + C_\eps |u|_{2^*}^{2^*} \leq (\eps+\eta) C_{N,2_*}^{2_*} \rho^{\frac{2}{N}} |\nabla u|_2^2 + C_\eps C_{N,2^*}^{2^*} |\nabla u|_2^{2^*} \\
&= \left( \eps C_{N,2_*}^{2_*} \rho^{\frac{2}{N}} + C_\eps C_{N,2^*}^{2^*} |\nabla u|_2^{\frac{4}{N-2}} + \eta C_{N,2_*}^{2_*} \rho^{\frac{2}{N}} \right) |\nabla u|_2^2 \\
&< \left( \eps C_{N,2_*}^{2_*} \rho^{\frac{2}{N}} + C_\eps C_{N,2^*}^{2^*} |\nabla u|_2^{\frac{4}{N-2}} + \frac{1}{2^*} \right) |\nabla u|_2^2 \\
&= \left( \eps C_{N,2_*}^{2_*} \rho^{\frac{2}{N}} + C_\eps C_{N,2^*}^{2^*} |\nabla u|_2^{\frac{4}{N-2}} + \frac{1}{2} - \frac{1}{N} \right) |\nabla u|_2^2.
\end{align*}
Taking
$$
\eps := \frac{1}{4N C_{N,2_*}^{2_*} \rho^{\frac{2}{N}}} > 0, \quad \delta := \left( \frac{1}{4N C_\eps C_{N,2^*}^{2^*}} \right)^{\frac{N-2}{4}} > 0
$$
we obtain that
$$
\int_{\R^N} G(u) \, dx \leq \left( \frac{1}{4N} + \frac{1}{4N} + \frac12 - \frac{1}{N} \right) |\nabla u|_2^2 = \left( \frac12 - \frac{1}{2N} \right) |\nabla u|_2^2.
$$
Hence
$$
J(u) = \frac12 |\nabla u|_2^2 - \int_{\R^N} G(u) \, dx \geq \frac12 |\nabla u|_2^2 - \left( \frac12 - \frac{1}{2N} \right) |\nabla u|_2^2 = \frac{1}{2N} |\nabla u|_2^2.
$$
Fix $u \in \cD \cap \cM$. In view of \eqref{eq:Hstrict}, $u$ clearly satisfies the inequality \eqref{ineq:2}. Then, from Lemma \ref{lem:phi}, for every $\lambda > 0$ there holds
$$
J(u) \geq J(\lambda^{N/2} u(\lambda \cdot)).
$$
Choose $\lambda := \frac{\delta}{|\nabla u|_2} > 0$, where $\delta > 0$ is chosen so that \eqref{eq:JJ} holds, and let $v :=  \lambda^{N/2} u(\lambda \cdot)$. Obviously $|v|_2 = |u|_2$ so that $v \in \cD$. Moreover $|\nabla v|_2 = \delta$. Then
$$
J(u) \geq J(v) \geq \frac{1}{2N} |\nabla v|_2^2 = \frac{1}{2N} \delta^2 >0.
$$
\end{proof}

Before we show that $\inf_{\cD \cap \cM} J$ is attained, we need the following profile decomposition result obtained in \cite[Theorem 1.4]{MederskiBLproblem} applied to $H$ satisfying
$$\lim_{u\to 0} H(u)/|u|^{2}=\lim_{|u|\to\infty}H(u)/|u|^{2^*}=0.$$

\begin{Th}\label{ThGerard}
	Suppose that $(u_n)\subset H^{1}(\R^N)$ is bounded.
	Then there are sequences
	$(\tu_i)_{i=0}^\infty\subset H^1(\R^N)$, $(y_n^i)_{i=0}^\infty\subset \R^N$ for any $n\geq 1$, such that $y_n^0=0$,
	$|y_n^i-y_n^j|\rightarrow \infty$ as $n\to\infty$ for $i\neq j$, and passing to a subsequence, the following conditions hold for any $i\geq 0$:
	\begin{eqnarray}
	\nonumber
	&& u_n(\cdot+y_n^i)\weakto \tu_i\; \hbox{ in } H^1(\R^N)\text{ as }n\to\infty,\\
	\label{EqSplit2a}
	&& \lim_{n\to\infty}\int_{\R^N}|\nabla u_n|^2\, dx=\sum_{j=0}^i \int_{\R^N}|\nabla\tu_j|^2\, dx+\lim_{n\to\infty}\int_{\R^N}|\nabla v_n^i|^2\, dx,
	\end{eqnarray}
	where $v_n^i:=u_n-\sum_{j=0}^i\tu_j(\cdot-y_n^j)$ and
	\begin{eqnarray}
	&&	 \limsup_{n\to\infty}\int_{\R^N}H(u_n)\, dx= \sum_{j=0}^\infty
	\int_{\R^N}H(\tu_j)\, dx.	\label{EqSplit3a}
	\end{eqnarray}
\end{Th}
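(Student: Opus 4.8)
The plan is to construct the profiles by the classical iterative concentration--compactness extraction of Lions, and then to verify separately the two splitting identities: the quadratic one \eqref{EqSplit2a}, which is essentially Hilbert-space orthogonality, and the nonlinear one \eqref{EqSplit3a} for $\int_{\R^N}H$, where the growth hypotheses $H(u)/|u|^2\to 0$ as $u\to 0$ and $H(u)/|u|^{2^*}\to 0$ as $|u|\to\infty$ enter. For the extraction I would set $\tu_0$ to be the weak $H^1$-limit of $u_n$ (after a subsequence; here $y_n^0=0$) and $v_n^0:=u_n-\tu_0$. Inductively, given $v_n^{i-1}$, put
$$\nu_i:=\limsup_{n\to\infty}\sup_{y\in\R^N}\int_{B(y,1)}|v_n^{i-1}|^2\,dx.$$
If $\nu_i=0$ I stop and set $\tu_j=0$ for $j\geq i$; otherwise I choose $y_n^i$ with $\int_{B(y_n^i,1)}|v_n^{i-1}|^2\,dx\geq \nu_i/2$ and pass to a subsequence so that $v_n^{i-1}(\cdot+y_n^i)\weakto \tu_i$ in $H^1(\R^N)$. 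Local compactness of $H^1\hookrightarrow L^2_{loc}$ then forces $\int_{B(0,1)}|\tu_i|^2\,dx\geq \nu_i/2>0$, so each extracted profile is nontrivial.

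The key bookkeeping step is the pairwise divergence $|y_n^i-y_n^j|\to\infty$ for $i\neq j$. Each remainder is built so that $v_n^{j}(\cdot+y_n^{k})\weakto 0$ for $k\leq j$; if some $|y_n^i-y_n^{j_0}|$ stayed bounded along a subsequence (with $j_0<i$), then continuity of weak limits under convergent translations would give $\tu_i=0$, a contradiction. Once divergence is known, $\tu_j(\cdot+y_n^i-y_n^j)\weakto 0$ for $j<i$, and unwinding $v_n^{i-1}=u_n-\sum_{j<i}\tu_j(\cdot-y_n^j)$ yields $u_n(\cdot+y_n^i)\weakto \tu_i$. Writing $u_n=\sum_{j=0}^i\tu_j(\cdot-y_n^j)+v_n^i$ and expanding $|\nabla u_n|_2^2$, the profile--profile cross terms vanish by $|y_n^j-y_n^k|\to\infty$ and the profile--remainder cross terms vanish because $v_n^i(\cdot+y_n^j)\weakto 0$, which gives \eqref{EqSplit2a}. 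The same computation for $|u_n|_2^2$ gives $\sum_{j\geq 0}|\tu_j|_2^2\leq\limsup_n|u_n|_2^2<\infty$, and together with $\int_{B(0,1)}|\tu_i|^2\,dx\geq\nu_i/2$ this forces $\nu_i\to 0$; hence the $L^2$-concentration of the remainders vanishes as $i\to\infty$, so by Lions' lemma $\limsup_n|v_n^i|_p\to 0$ as $i\to\infty$ for every $2<p<2^*$.

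The genuine difficulty is the nonlinear splitting. For fixed $i$ I would first establish the Brezis--Lieb-type identity $\int_{\R^N}H(u_n)\,dx=\sum_{j=0}^i\int_{\R^N}H(\tu_j)\,dx+\int_{\R^N}H(v_n^i)\,dx+o(1)$ as $n\to\infty$, by induction on $i$ from the single-bubble splitting $\int H(w_n)=\int H(w)+\int H(w_n-w(\cdot-y_n))+o(1)$ (valid under the stated growth of $H$) together with translation invariance of the integral. The real obstacle is the passage $i\to\infty$, and it is controlled entirely by the subcriticality of $H$ at both ends of the scale through the bound $|H(u)|\leq\eps(|u|^2+|u|^{2^*})+C_\eps|u|^p$ for an intermediate $p\in(2,2^*)$, which is exactly what $H(u)/|u|^2\to0$ at $0$ and $H(u)/|u|^{2^*}\to 0$ at $\infty$ provide. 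Using it, I would show, on one hand, that $\sum_j\int H(\tu_j)$ converges absolutely: since $\sum_j\|\tu_j\|^2<\infty$, the Gagliardo--Nirenberg inequality \eqref{eqGN} gives $\sum_j|\tu_j|_p^p<\infty$ (as $\|\tu_j\|\to0$ and $p>2$), and likewise for the $L^2$ and $L^{2^*}$ terms. On the other hand, I would show $\limsup_n\int H(v_n^i)\to 0$ as $i\to\infty$, because the $L^2$ and $L^{2^*}$ norms of $v_n^i$ are uniformly bounded (so the $\eps$-terms are small) while $\limsup_n|v_n^i|_p\to0$ kills the $C_\eps$-term. Letting $i\to\infty$ in the fixed-$i$ identity then yields \eqref{EqSplit3a}. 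The technical heart of the whole argument is precisely this interchange of the double limit in $n$ and $i$, for which the growth control on $H$ is indispensable.
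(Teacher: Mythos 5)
The paper does not prove Theorem \ref{ThGerard} at all: it is quoted from \cite[Theorem 1.4]{MederskiBLproblem}, whose proof is precisely the iterative Lions-type extraction you describe, so your outline is correct and takes essentially the same route as the actual source. The only points that deserve explicit care in a full write-up are the diagonal subsequence making the limits in \eqref{EqSplit2a} exist simultaneously for every $i$, the quantitative form of Lions' lemma needed to deduce $\limsup_n|v_n^i|_p\to 0$ as $i\to\infty$ from $\nu_i\to 0$ together with the uniform $H^1$-bound on $(v_n^i)$, and the Brezis--Lieb splitting for $H$, which uses (A0) together with the double subcriticality $H(u)=o(|u|^2)$ at $0$ and $H(u)=o(|u|^{2^*})$ at infinity exactly as you indicate.
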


\begin{Lem}\label{lem:c_0attained} 
Suppose that (A0)--(A5) and \eqref{eq:Hstrict} hold. Then $c=\inf_{\cD \cap \cM} J$ is attained. If, in addition, $g$ is odd, then $c$ is attained by a nonnegative and radially symmetric function in $\cD \cap \cM$.
\end{Lem}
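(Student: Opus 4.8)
\emph{The plan is} to run the direct method along a minimizing sequence, use the profile decomposition of Theorem~\ref{ThGerard} to account for the lost mass, and then recover a point of $\cD\cap\cM$ realizing $c$ by rescaling a single nontrivial profile back onto $\cM$. Throughout I write
\[
\Theta(u):=\tfrac{N}{4}H(u)-G(u)=\tfrac{N}{4}\big(H(u)-\tfrac{4}{N}G(u)\big),
\]
which is nonnegative by (A5) and, on $\cM$, satisfies $J(u)=\int_{\R^N}\Theta(u)\,dx$ (the identity already exploited in Lemma~\ref{lemCoercive}). First I fix a minimizing sequence $(u_n)\subset\cD\cap\cM$ with $J(u_n)\to c$. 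By Lemma~\ref{lemCoercive} it is bounded in $H^1(\R^N)$, so after passing to a subsequence $|\nabla u_n|_2^2\to\alpha$, and $\alpha>0$ by Lemma~\ref{lem1}. Applying Theorem~\ref{ThGerard} yields profiles $(\tu_j)$ and translations $(y_n^j)$; since \cite[Theorem~1.4]{MederskiBLproblem} applies to any nonlinearity with the growth of $H$, the splitting \eqref{EqSplit3a} holds simultaneously for $H$, for $G$, and hence for $\Theta$. Weak lower semicontinuity of the $L^2$-norm gives $|\tu_j|_2^2\le\rho$, so each $\tu_j\in\cD$ and, via \eqref{eq:Hstrict}, satisfies \eqref{ineq:2}; thus Lemma~\ref{lem:phi} is available at every nontrivial profile. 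Using $u_n\in\cM$ together with \eqref{EqSplit2a}, \eqref{EqSplit3a} I record the two bookkeeping identities
\begin{equation*}
\sum_{j}\int_{\R^N}\Theta(\tu_j)\,dx=\lim_{n\to\infty}J(u_n)=c,\qquad \sum_j M(\tu_j)=\sum_j|\nabla\tu_j|_2^2-\alpha\le 0 ,
\end{equation*}
the last equality because $\tfrac{N}{2}\sum_j\int_{\R^N}H(\tu_j)\,dx=\lim_n|\nabla u_n|_2^2=\alpha$ and the inequality because $\sum_j|\nabla\tu_j|_2^2\le\alpha$.

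Not all profiles vanish (otherwise $\int_{\R^N}H(u_n)\,dx\to0$, forcing $|\nabla u_n|_2\to0$ against Lemma~\ref{lem1}), and since $\sum_j M(\tu_j)\le0$ there is a nontrivial profile $u:=\tu_{i_0}$ with $M(u)\le0$. The decisive point is the monotonicity
\[
(0,\infty)\ni\lambda\longmapsto \int_{\R^N}\Theta\big(\lambda^{N/2}u(\lambda\cdot)\big)\,dx \ \text{ is nondecreasing,}
\]
which I prove exactly as the monotonicity of $\lambda\mapsto\int_{\R^N}H(\lambda^{N/2}u)(\lambda^{N/2})^{-2_*}\,dx$ in Lemma~\ref{lem:phi}: differentiating reduces it to the inequality $(2+\tfrac{4}{N})H(s)\le h(s)s$ of (A4) (equivalently, the weak monotonicity of $s\mapsto H(s)/|s|^{2_*}$), using $H\ge0$ which holds under (A2),(A5). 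Because $M(u)\le0$, the projection supplied by Lemma~\ref{lem:phi} is $w:=\lambda_u^{N/2}u(\lambda_u\cdot)\in\cD\cap\cM$ with $\lambda_u\le1$. Hence
\[
c\le J(w)=\int_{\R^N}\Theta(w)\,dx\le\int_{\R^N}\Theta(u)\,dx\le c,
\]
the first inequality since $w\in\cD\cap\cM$, the second by the monotonicity together with $\lambda_u\le1$, and the third because $\int_{\R^N}\Theta(u)\,dx$ is a single summand of $\sum_j\int_{\R^N}\Theta(\tu_j)\,dx=c$ with nonnegative terms. Therefore $J(w)=c$ and $c$ is attained by $w$. (When $M(u)=0$ one has $\lambda_u=1$ and the minimizer is the profile itself, matching the weak limit described in {\em Step 3}.)

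If $g$ is odd, then $G$, $H$ and $\Theta$ are even, and I symmetrize the minimizer: let $w^{\ast}:=|w|^{\ast}$ be the Schwartz symmetrization of $|w|$. Equimeasurability preserves $\int_{\R^N}\Theta(w^{\ast})\,dx=\int_{\R^N}\Theta(w)\,dx=c$ and $|w^{\ast}|_2=|w|_2$, so $w^{\ast}\in\cD$, while the P\'olya--Szeg\H{o} inequality gives $|\nabla w^{\ast}|_2\le|\nabla w|_2$ and hence $M(w^{\ast})\le M(w)=0$. Projecting $w^{\ast}$ onto $\cM$ as above (again with scale $\le1$) produces a nonnegative, radially symmetric $v\in\cD\cap\cM$ with $J(v)\le\int_{\R^N}\Theta(w^{\ast})\,dx=c$, whence $J(v)=c$.

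The main obstacle is the loss of compactness: the weak limit of $(u_n)$ need not lie on $\cM$, since mass may travel to spatial infinity or leak into the profile-decomposition remainder, so one cannot simply pass to the limit. The profile decomposition accounts for the escaping mass, but the genuinely delicate ingredient is combining the sign information $\sum_j M(\tu_j)\le0$ with the monotonicity of $\lambda\mapsto\int_{\R^N}\Theta(\lambda^{N/2}u(\lambda\cdot))\,dx$: this is precisely what guarantees a nontrivial profile that can be scaled \emph{down} onto $\cM$ without raising $\int_{\R^N}\Theta$, thereby pinning the energy of the rescaled profile exactly to the infimum.
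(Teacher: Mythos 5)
Your proposal is correct and follows essentially the same route as the paper: minimizing sequence, profile decomposition, extraction of a nontrivial profile with $M\le 0$ from the bookkeeping identities, projection onto $\cM$ without increasing the energy, and Schwarz symmetrization for the odd case. The only point to tighten is the third inequality in your chain: the splitting \eqref{EqSplit3a} is stated only for $H$, so asserting it ``simultaneously for $G$ and hence for $\Theta$'' goes beyond the quoted Theorem~\ref{ThGerard}; the bound $\int_{\R^N}\Theta(\tu_{i_0})\,dx\le c$ that you actually need follows at once from Fatou's lemma applied to $\Theta\ge 0$ along $u_n(\cdot+y_n^{i_0})\to\tu_{i_0}$ a.e., which is what the paper does. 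Your use of the mass-preserving dilation $\lambda^{N/2}u(\lambda\cdot)$ together with the (A4)-monotonicity of $\lambda\mapsto\int_{\R^N}\Theta(\lambda^{N/2}u(\lambda\cdot))\,dx$ is a clean variant of the paper's pure dilation $u(r(u)\cdot)$ and the explicit factor $r(u)^{-N}$, and both arguments are equally valid.
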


\begin{proof}
Take any sequence $(u_n) \subset \cD \cap \cM$ such that $J(u_n) \to c$ and by Lemma \ref{lemCoercive}, $(u_n)$ is bounded in $H^1(\R^N)$. Note that by (A1), (A3) and (A5), we may apply Theorem \ref{ThGerard} we find a profile decomposition of $(u_n)$ satisfying \eqref{EqSplit2a} and \eqref{EqSplit3a}.
We show that
$$0<\int_{\R^N}|\nabla \tu_i|^2\,dx\leq \frac{N}{2}\int_{\R^N} H(\tu_i) \, dx$$
for some $i\geq 0$. Let
$$I:=\{i\geq 0: \tu_i\neq 0\}.$$
In view of Lemma \ref{lem1} and \eqref{EqSplit3a}, $I\neq \emptyset$.
Suppose that  
$$\int_{\R^N}|\nabla \tu_i|^2\,dx>\frac{N}{2}\int_{\R^N} H(\tu_i) \, dx$$
for all $i\in I$.
Then by \eqref{EqSplit2a} and \eqref{EqSplit3a}
\begin{eqnarray*}
\limsup_{n\to\infty} \frac{N}{2}\int_{\R^N}H(u_n)\,dx&=&\limsup_{n\to\infty}\int_{\R^N}|\nabla u_n|^2\,dx
\geq \sum_{j=0}^\infty \int_{\R^N}|\nabla\tu_j|^2\, dx=\sum_{j\in I} \int_{\R^N}|\nabla\tu_j|^2\, dx\\
&>& \sum_{j=0}^\infty \frac{N}{2}\int_{\R^N} H(\tu_j) \, dx=\limsup_{n\to\infty} \frac{N}{2}\int_{\R^N}H(u_n)\,dx,
\end{eqnarray*}
which is a contradiction. 
Therefore there is $i\in I$ such that $r(\tu_i)\geq 1$ defined as in \eqref{def:r} and
$\tu_i(r(\tu_i) \cdot) \in \cM$.  Moreover
	$$\int_{\R^N}|\tu_i(r(\tu_i)\cdot)|^2\,dx=r(\tu_i)^{-N}\int_{\R^N}|\tu_i|^2\,dx\leq r(\tu_i)^{-N}\rho\leq \rho,$$
hence  $\tu_i(r(\tu_i)\cdot)\in\cD\cap\cM$.
If $r(\tu_i)>1$, then passing to a subsequence $u_n(x+y_n^i)\to\tu_i(x)$ for a.e. $x\in\R^N$ and by Fatou's lemma
	\begin{eqnarray*}
		0<\inf_{\cD\cap\cM}J
		&\leq& J(\tu_i(r(\tu_i)\cdot))
		=r(\tu_i)^{-N}\frac{N}{4}\int_{\R^N}H(\tu_i)-\frac{4}{N}G(\tu_i)\, dx\\
		&<& \frac{N}{4}\int_{\R^N}H(\tu_i)-\frac{4}{N}G(\tu_i)\, dx\\
		&\leq& \liminf_{n\to\infty} \frac{N}{4}\int_{\R^N}H(u_n(\cdot+y_n^i))-\frac{4}{N}G(u_n(\cdot+y_n^i))\, dx\\
		&=& \liminf_{n\to\infty} J(u_n)=c=\inf_{\cD\cap\cM}J
	\end{eqnarray*}
and again we get a contradiction. Therefore
	$r(\tu_i)=1$, $\tu_i\in\cD\cap\cM$  and 
	\begin{eqnarray*}
		J(\tu_i)
		&=&\frac{N}{4}\int_{\R^N}H(\tu_i)-\frac{4}{N}G(\tu_i)\, dx\\
		&\leq& \liminf_{n\to\infty} \frac{N}{4}\int_{\R^N}H(u_n(\cdot+y_n^i))-\frac{4}{N}G(u_n(\cdot+y_n^i))\, dx\\
		&=& \liminf_{n\to\infty} J(u_n)=c.
	\end{eqnarray*}
Thus $J(\tu_i)=c$.

Suppose that $g$ is odd. Then $G$ and $H$ are even, so that $G(|u|)=G(u)$ and $H(|u|)=H(u)$ for all $u \in H^1(\R^N)$. We define
$
\tv_i := |\tu_i|^*
$
as the Schwarz symmetrization of $|\tu_i|$.
Then $|\tv_i|_2 = |\tu_i|_2$, hence $\tv_i \in \cD$. Moreover, since
$$
\int_{\R^N} |\nabla \tv_i|^2 \, dx \leq \int_{\R^N} |\nabla \tu_i|^2 \, dx = \frac{N}{2} \int_{\R^N} H(\tu_i) \, dx = \frac{N}{2} \int_{\R^N} H(\tv_i) \, dx,
$$
we obtain that $r(\tv_i) \geq 1$, where $r$ is given by \eqref{def:r} and $\tv_i(r(\tv_i) \cdot) \in \cM$. Suppose that $r(\tv_i) > 1$. Then
\begin{align*}
\inf_{\cD \cap \cM} J &\leq J(\tv_i (r(\tv_i) \cdot)) = r(\tv_i)^{-N} \frac{N}{4} \int_{\R^N} H(\tv_i) - \frac{4}{N} G(\tv_i) \, dx \\
&< \frac{N}{4} \int_{\R^N} H(\tv_i) - \frac{4}{N} G(\tv_i) \, dx = \frac{N}{4} \int_{\R^N} H(\tu_i) - \frac{4}{N} G(\tu_i) \, dx = J(\tu_i) = \inf_{\cD \cap \cM} J,
\end{align*}
which is a contradiction. Hence $r(\tv_i) = 1$ and $\tv_i \in \cM$. Obviously $J(\tv_i) = \inf_{\cD \cap \cM} J$, $\tv_i \geq 0$ and $\tv_i$ is radially symmetric. 

\end{proof}

\begin{Lem}\label{lem:SDineq} Suppose that (A0)--(A5), \eqref{eq:Hstrict} hold. Assume moreover that \\
(a) (A5, $\preceq$) hold\\
or\\
(b) $\frac4N G(s) \preceq H(s)$ for $s \in \R$, $g$ is odd and $N \in \{3,4\}$.\\
For any $u \in (\cD \setminus \cS) \cap \cM$ there holds
	$$
	\inf_{\cS \cap \cM} J < J(u).
	$$
\end{Lem}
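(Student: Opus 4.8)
The plan is to raise the mass of $u$ from $|u|_2^2<\rho$ up to $\rho$ by dilating in amplitude and then projecting back onto $\cM$ through the dilation of Lemma \ref{lem:phi}, and to show that this strictly lowers the energy. Since $u\in(\cD\setminus\cS)\cap\cM$ we have $0<|u|_2^2<\rho$; set $s_0:=(\rho/|u|_2^2)^{1/2}>1$, and for $s\in[1,s_0]$, $\lambda>0$ put $V_{s,\lambda}:=s\lambda^{N/2}u(\lambda\,\cdot)$ and $\varphi_s(\lambda):=J(V_{s,\lambda})$. Because $|V_{s,\lambda}|_2^2=s^2|u|_2^2\le\rho$, the function $su$ satisfies \eqref{ineq:2} (here $2<2^*$ together with \eqref{eq:Hstrict} are used), so Lemma \ref{lem:phi} applies: $\varphi_s$ attains its maximum over $(0,\infty)$ on an interval, each maximizer $\lambda^*(s)$ yields $V_{s,\lambda^*(s)}\in\cM$, and these points keep mass $s^2|u|_2^2$. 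Write $\Phi(s):=\max_{\lambda>0}\varphi_s(\lambda)$. Since $u\in\cM$, Lemma \ref{lem:phi} puts $1$ in the maximizing interval at $s=1$, so $\Phi(1)=J(u)$; and at $s=s_0$ any maximizer gives $w:=V_{s_0,\lambda_0}\in\cM$ with $|w|_2^2=\rho$, i.e. $w\in\cS\cap\cM$. Hence $\inf_{\cS\cap\cM}J\le\Phi(s_0)$, and it remains to compare $\Phi(s_0)$ with $\Phi(1)=J(u)$.

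The heart of the matter is the monotonicity of $\Phi$. A direct differentiation gives, for all $s,\lambda$,
$$\partial_s\varphi_s(\lambda)=\frac1s\Big(|\nabla V_{s,\lambda}|_2^2-\int_{\R^N}g(V_{s,\lambda})V_{s,\lambda}\,dx\Big),$$
and by the envelope theorem $\Phi'(s)$ equals this at a maximizer $\lambda^*(s)$. Since $V:=V_{s,\lambda^*(s)}\in\cM$, i.e. $|\nabla V|_2^2=\frac N2\int_{\R^N}H(V)\,dx$, and $g(V)V=H(V)+2G(V)$, one obtains
$$\Phi'(s)=\frac1s\Big(\tfrac{N-2}{2}\int_{\R^N}H(V)\,dx-2\int_{\R^N}G(V)\,dx\Big),$$
so that $\Phi'(s)<0$ if and only if $\int_{\R^N}\big(H(V)-(2^*-2)G(V)\big)\,dx<0$.

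In case (a), assumption (A5,$\preceq$) contains $H\preceq(2^*-2)G$, so Lemma \ref{lem:ineq} yields $\int_{\R^N}(H(V)-(2^*-2)G(V))\,dx<0$ for every $V\ne0$. Thus $\Phi'<0$ on $[1,s_0]$, whence $\Phi(s_0)<\Phi(1)=J(u)$ and $\inf_{\cS\cap\cM}J\le\Phi(s_0)<J(u)$, as claimed.

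In case (b) only the non-strict bound $H\le(2^*-2)G$ from (A5) is available, so the computation merely yields $\Phi'\le0$ and $\inf_{\cS\cap\cM}J\le\Phi(s_0)\le J(u)$; upgrading this to a strict inequality is the main obstacle. Suppose, for contradiction, that $\inf_{\cS\cap\cM}J=J(u)$. Then $w$ attains $\inf_{\cS\cap\cM}J$ and $\Phi$ is constant on $[1,s_0]$, so $\Phi'(s_0)=0$ and, by the displayed formula together with the pointwise inequality, $H(w)=(2^*-2)G(w)$ a.e. As $g$ is odd, I would (exactly as in Lemma \ref{lem:c_0attained}) replace $w$ by its Schwarz symmetrization and assume $w$ radial, nonnegative, and continuous, so its range is an interval $[0,\|w\|_\infty]$; there the identity $g(t)t=2^*G(t)$ forces $G(t)=c\,t^{2^*}$ and $g(t)=2^*c\,t^{2^*-1}$ for some $c>0$ (if $\eta>0$ this already contradicts (A1), since $2^*>2_*$). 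As a minimizer of $J$ on $\cS\cap\cM$, $w$ solves an Euler--Lagrange equation with multipliers for the two constraints; because $h=H'=(2^*-2)g$ on the range of $w$, this equation reduces to a purely critical one $-\sigma\Delta w+\lambda w=K\,w^{2^*-1}$ with $\sigma>0$, $K>0$, and comparing its Pohozaev and Nehari identities forces the zero-order coefficient to vanish, so $w$ solves $-\Delta w=K'w^{2^*-1}$ with $w\in H^1(\R^N)$, $w\ge0$, $w\not\equiv0$. Its only such solutions are the Aubin--Talenti instantons, which decay like $|x|^{-(N-2)}$ and therefore fail to lie in $L^2(\R^N)$ precisely when $N\in\{3,4\}$, contradicting $w\in\cS\subset L^2(\R^N)$. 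Hence the inequality is strict. The hard part is exactly this last step: when the upper Gagliardo--Nirenberg inequality is not strict, the borderline case manufactures a critical Sobolev bubble, and it is the dimensional restriction $N\in\{3,4\}$ (non-integrability of the instanton in $L^2$) that excludes it.
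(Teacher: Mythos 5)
Your route is genuinely different from the paper's: you deform the mass from $|u|_2^2$ up to $\rho$ and track the fiber-maximum $\Phi(s)=\max_{\lambda>0}J(s\lambda^{N/2}u(\lambda\cdot))$, in the spirit of Bellazzini--Jeanjean and Jeanjean--Lu, whereas the paper argues by contradiction that a minimizer in the relatively open set $(\cD\setminus\cS)\cap\cM$ would be a critical point of $J|_{\cM}$ with a single Lagrange multiplier $\mu$, and then excludes $\mu=-1$, $\mu\neq 0$ and finally $\mu=0$ by pointwise rigidity arguments. Your computation of $\partial_s\varphi_s(\lambda)$ and the equivalence $\partial_s\varphi_s(\lambda)<0\Leftrightarrow\int_{\R^N}\big(H(V)-(2^*-2)G(V)\big)\,dx<0$ at points of $\cM$ is correct, and in case (a) the use of Lemma \ref{lem:ineq} is exactly right; your case (a) even avoids the first inequality of (A5,$\preceq$), which the paper needs to exclude $\mu=-1$.

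However, the central step is not justified: you invoke ``the envelope theorem'' as if $\Phi$ were differentiable with $\Phi'(s)=\partial_s\varphi_s(\lambda^*(s))$, but Lemma \ref{lem:phi} only produces an \emph{interval} $[a(s),b(s)]$ of maximizers, so $\lambda^*(s)$ is neither unique nor known to depend continuously on $s$, and $\Phi$ need not be differentiable. This is precisely the obstruction the authors point out in the introduction (``the existence of the continuous projection of $H^1(\R^N)\setminus\{0\}$ onto $\cM$ preserving the $L^2$-norm\dots seems to be not present in our situation''), and it is why they abandon this strategy. The argument can be repaired, but only by proving (i) continuity of $\Phi$ on $[1,s_0]$ (via compactness of the effective maximizer sets, using (A2)--(A3) to prevent $\lambda\to 0$ or $\lambda\to\infty$) and (ii) a one-sided Danskin bound $D^+\Phi(s)\leq\max_{\lambda\in[a(s),b(s)]}\partial_s\varphi_s(\lambda)$, after which $D^+\Phi<0$ plus continuity yields strict decrease; note that the naive integral bound $\Phi(s')-\Phi(s)\leq\int_s^{s'}\partial_\sigma\varphi_\sigma(\lambda^*(s'))\,d\sigma$ fails because the integrand is evaluated off $\cM$ for $\sigma<s'$. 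In case (b) there are further gaps: you assert $\sigma=1+\mu>0$ and $K>0$ without excluding $\mu=-1$ (here it can be excluded cheaply since $g$ is already a pure power $2^*c\,t^{2^*-1}$ on the range of $w$, forcing $c=0$, but this must be said); and the Schwarz symmetrization step borrowed from Lemma \ref{lem:c_0attained} requires knowing that $w$ minimizes $J$ over all of $\cD\cap\cM$, not merely over $\cS\cap\cM$ --- this does follow under your contradiction hypothesis (apply the deformation to a global minimizer of $J$ on $\cD\cap\cM$ to see $\inf_{\cD\cap\cM}J=\inf_{\cS\cap\cM}J$), but it is not automatic and is nowhere addressed.
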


\begin{proof}
	Suppose by contradiction that there is $\widetilde{u} \in \cM$ such that $\int_{\R^N} |\widetilde{u}|^2 \, dx < \rho$ and
	$$
	c= J(\widetilde{u}) \leq \inf_{\cS \cap \cM} J.
	$$
	Hence $\widetilde{u}$ is a local minimizer for $J$ on $\cD \cap \cM$. Since $\cD \setminus \cS$ is an open set in $\cM$, we see that $\widetilde{u}$ is a local minimizer of $J$ on $\cM$. Hence there is a Lagrange multiplier $\mu \in \R$ such that
	$$
	J'(\widetilde{u})(v) + \mu \left( \int_{\R^N} \nabla \widetilde{u} \nabla v \, dx - \frac{N}{4} \int_{\R^N} h(\widetilde{u}) v \, dx \right) = 0
	$$
	for any $v \in H^1(\R^N)$, i.e. $\widetilde{u}$ is a weak solution to
	$$
	-\Delta \widetilde{u} - g(\widetilde{u}) + \mu \left( -\Delta \widetilde{u} - \frac{N}{4} h(\widetilde{u}) \right) = 0
	$$
	or equivalently
	$$
	-(1+\mu) \Delta \widetilde{u} = g(\tu) + \frac{N}{4} \mu h(\widetilde{u}).
	$$
	In particular $\widetilde{u}$ satisfies the following Nehari-type identity
	$$
	(1+\mu) \int_{\R^N} |\nabla\widetilde{u}|^2 \, dx = \int_{\R^N} g(\widetilde{u}) \widetilde{u} + \frac{N}{4} \mu h(\widetilde{u})\widetilde{u} \, dx.
	$$
	If $\mu = -1$ we obtain that
	$$
	\int_{\R^N} g(\widetilde{u}) \widetilde{u} - \frac{N}{4} h(\widetilde{u})\widetilde{u} \, dx = 0.
	$$
	On the other hand, by (A4), (A5), $\frac4N G(s) \preceq H(s)$ for $s \in \R$, and Lemma \ref{lem:ineq}
	\begin{align*} 
	\int_{\R^N} g(\widetilde{u}) \widetilde{u} - \frac{N}{4} h(\widetilde{u})\widetilde{u} \, dx &= - \frac{N}{4} \int_{\R^N} h(\widetilde{u})\widetilde{u} - \frac{4}{N} g(\widetilde{u}) \widetilde{u} \, dx \\
	&\leq - \frac{N}{4} \int_{\R^N} \left(2+\frac{4}{N}\right) H(\tu)-\frac{4}{N} g(\tu)\tu \, dx \\
	&= - \frac{N}{2} \int_{\R^N} H(\tu)- \frac{4}{N} G(\tu) \, dx < 0,
	\end{align*} 
	and we obtain a contradiction. Hence $\mu \neq -1$. Since $\widetilde{u} \in \cM$ we obtain
	\begin{align*}
	\int_{\R^N} |\nabla \widetilde{u}|^2 \, dx = \frac{N}{2} \int_{\R^N} H(\widetilde{u}) \, dx.
	\end{align*}
	On the other hand $\widetilde{u}$ satisfies Pohozaev and Nehari identities. Thus
	$$
	(1+\mu) \int_{\R^N} |\nabla \widetilde{u}|^2 \, dx = \frac{N}{2} \int_{\R^N} H(\widetilde{u}) + \frac{N}{4} \mu \left( h(\widetilde{u})\widetilde{u} - 2 H(\widetilde{u}) \right) \, dx.
	$$
	Combining these two identities we get
	$$
	(1+\mu) \frac{N}{2} \int_{\R^N} H(\widetilde{u}) \, dx = \frac{N}{2} \int_{\R^N} H(\widetilde{u}) + \frac{N}{4} \mu \left( h(\widetilde{u})\widetilde{u} - 2 H(\widetilde{u}) \right) \, dx.
	$$
	Thus
	$$
	\mu \int_{\R^N} H(\widetilde{u}) \, dx = \frac{N}{4} \mu \int_{\R^N}  h(\widetilde{u})\widetilde{u} - 2 H(\widetilde{u}) \, dx.
	$$
	If $\mu\neq0$, then
	$$
	\int_{\R^N} h(\widetilde{u})\widetilde{u} - \left(2 + \frac{4}{N} \right) H(\widetilde{u}) \, dx=0.
	$$
	From the elliptic regularity theory we may assume that $\tu$ is continuous and $h(\widetilde{u}(x))\widetilde{u}(x) - \left(2 + \frac{4}{N} \right) H(\widetilde{u}(x)) = 0$ for all $x \in \R^N$. Since $\tu \in H^1 (\R^N)$, we know that $\tu(x) \to 0$ as $|x| \to \infty$. In particular, there is an open interval $I$ such that $0 \in \overline{I}$ and $h(u)u - \left(2 + \frac{4}{N} \right) H(u) = 0$ for $u \in \overline{I}$. Hence $H(u) = C |u|^{2+\frac{4}{N}}$ for some $C > 0$ and $u \in \overline{I}$, which is a contradiction with the first inequality in (A5,$\preceq$). 	
	Thus we have $\mu = 0$ and $\widetilde{u}$ is a weak solution to
	$$
	-\Delta \widetilde{u}  = g(\widetilde{u}).
	$$
	From the Nehari-type identity
	$$
	\int_{\R^N} |\nabla \widetilde{u}|^2 \, dx = \int_{\R^N} g(\widetilde{u}) \widetilde{u} \, dx
	$$
	and since $\widetilde{u} \in \cM$ 
we obtain
	$$
	\int_{\R^N} g(\widetilde{u}) \widetilde{u} \, dx = \frac{N}{2} \int_{\R^N} H(\widetilde{u}) \, dx
	$$
	and
	$$
	\int_{\R^N} 2^* G(\widetilde{u}) -  g(\widetilde{u})\widetilde{u} \, dx = 0.
	$$
By the elliptic regularity theory, $\tu$ is continuous and
in view of (A5)
$$2^* G(\widetilde{u}(x))=g(\widetilde{u}(x))\widetilde{u}(x)$$ for $x\in\R^N$. Since $\tu\in H^1(\R^N)$, there is an open interval $I \subset \R$ such that $0 \in \overline{I}$ and
$2^* G(u)=g(u)u$ for $u\in \overline{I}$.
Then there is $C>0$ such that $G(u)=C|u|^{2^*}$ for $u\in \overline{I}$. Now we need to consider two cases.\\
(a) If the inequality (A5, $\preceq$) holds, then we obtain a contradiction immediately.
(b) If $g$ is odd, then in view of Lemma \ref{lem:c_0attained}, we may assume that $\tu$ is nonnegative and radially symmetric. Moreover $\tu$ solves
\begin{equation}\label{eq:critical}
-\Delta \tu = (2^* C) |\tu|^{2^* - 2 } \tu,
\end{equation}
and we get a contradiction, since the nonnegative and radial solution to problem \eqref{eq:critical} is a Aubin-Talenti instanton, up to a scaling and a translation, which is not $L^2$-integrable if $N \in \{3,4\}$, see \cite[Section 6.2]{delPino}, cf. \cite{Aubin,Talenti}. 

\end{proof}

\begin{altproof}{Theorem \ref{th:main}}
In view of Lemma \ref{lem:c_0attained} and Lemma \ref{lem:SDineq} we infer that $c=\inf_{\cS \cap \cM} J$ is attained.
	Now we find Lagrange multipliers $\lambda,\mu\in\R$ such that $\tu\in\cS\cap\cM$ solves
	$$-\Delta \tu-g(\tu)+\lambda \tu +\mu \Big(-\Delta \tu-\frac{N}{4}h(\tu)\Big)=0,$$
	that is
	\begin{equation}\label{eq:mu}
	-(1+\mu)\Delta \tu +\lambda\tu =g(\tu)+\frac{N}{4}\mu h(\tu).
	\end{equation}
	Suppose that $\mu=-1$ and consider two cases.\\
(a)
Suppose that $g'(u)=o(1)$ as $u \to 0$. Then, from (A1) and (A5) there follows that $g(u)=o(u)$ and $h(u)=o(u)$ as $u \to 0$. Note that by (A4), the first inequality in (A5, $\preceq$) and Lemma \ref{lem:ineq}
	\begin{align*}
	\lambda\int_{\R^N}|\tu|^2\,dx &=\int_{\R^N}g(\tu)\tu+\frac{N}{4}\mu h(\tu)\tu\, dx = \frac{N}{4} \int_{\R^N}\frac{4}{N} g(\tu)\tu- h(\tu)\tu\, dx \\
	&\leq \frac{N}{2} \int_{\R^N} \frac{4}{N} G(\tu) - H(\tu) \, dx < 0,
	\end{align*}
	hence $\lambda<0$.
On the other hand, take
$$\Sigma:=\Big\{x\in\R^N: \lambda\tu(x) =g(\tu(x))-\frac{N}{4}h(\tu(x))\Big\}$$ 
and note that the measure of
$\Om:=\{x\in \Sigma: \tu(x)\neq 0\}$ is nonzero.
Suppose that
$\delta:=\essinf_{x\in\Om}|\tu(x)|>0$. Since $\tu\in L^2(\R^N)\setminus\{0\}$, we infer that $\Om$ has finite positive measure and observe that
$$\int_{\R^N}|\tu(x+h)-\tu(x)|^2\,dx\geq \delta^2 \int_{\R^N}|\chi_\Om(x+h)-\chi_\Om(x)|^2\,dx\quad\hbox{ for any }h\in\R^N,$$
where $\chi_\Om$ is the characteristic function of $\Om$. In view of \cite[Theorem 2.1.6]{Ziemer} we infer that $\chi_\Om\in H^1(\R^N)$, hence  we get a contradiction. Therefore we find a sequence $(x_n)\subset \Om$ such that $\tu(x_n)\to 0$ and
$$\lambda=\frac{g(\tu(x_n))\tu(x_n)-\frac{N}{4}h(\tu(x_n))\tu(x_n)}{|\tu(x_n)|^2}$$
for any $n\geq 1$. From (A5) there follows that
$$
\frac{g(\tu(x_n))\tu(x_n)}{|\tu(x_n)|^2}=\frac{H(\tu(x_n))}{|\tu(x_n)|^2}+\frac{2G(\tu(x_n))}{|\tu(x_n)|^2} \to 0
$$
as $n\to\infty$.
Hence
$$\lambda=-\lim_{n\to\infty}\frac{\frac{N}{4}h(\tu(x_n))\tu(x_n)}{|\tu(x_n)|^2}=-
\frac{N}{4}\lim_{n\to\infty}\Big(g'(\tu(x_n))-\frac{g(\tu(x_n))\tu(x_n)}{|\tu(x_n)|^2}\Big)=0$$
and we obtain a contradiction.\\
(b) Suppose that $g$ is odd. Then we may assume that $\tu$ is positive and radially symmetric. Then from Strauss lemma (\cite[Radial Lemma 1]{Strauss}) we may assume that $\tu$ is continuous and from \eqref{eq:mu} 
$$
\lambda \tu(x) = g(\tu (x))-\frac{N}{4} h(\tu(x))
$$
holds for $x \in \R^N$. Since $\tu$ is continuous and $\tu \in H^1 (\R^N)$, there is an interval $I$ such that $0 \in \overline{I}$ and
$$
\lambda u = g(u) - \frac{N}{4} h(u)\quad\hbox{ for } u \in \overline{I}.
$$
From the definition of $h$ we obtain that
$$
\lambda u = \left(1+\frac{N}{4} \right) g(u) - \frac{N}{4} g'(u)u\quad\hbox{ for } u \in \overline{I}.
$$
Hence
$$
g(u) = C_1 |u|^{\frac{4}{N}} u + C_2 u, \quad u \in \overline{I}
$$
for some $C_1, C_2 \in \R$. In particular $G(u) = \frac{C_1}{2+\frac{4}{N}} |u|^{2+\frac{4}{N}} + \frac{C_2}{2} u^2$. From (A1) there follows that $C_2 = 0$, $C_1 \geq 0$ and we obtain a contradiction with the first inequality in (A5,$\preceq$).

	Therefore $\mu\neq -1$ and  taking into account  Nehari and Pohozaev identities for \eqref{eq:mu},
	we obtain
\begin{eqnarray*}
\frac12(1+\mu)\int_{\R^N}|\nabla \tu|^2+\lambda|\tu|^2\, dx&=&\int_{\R^N} g(\tu)\tu+\frac{N}{4}\mu h(\tu)\tu\,dx,\\
(1+\mu)\int_{\R^N}|\nabla \tu|^2+\frac{2^*}{2}\lambda|\tu|^2\, dx&=&2^*\int_{\R^N} G(\tu)+\frac{N}{4}\mu H(\tu)\,dx,
\end{eqnarray*}	
thus
\begin{equation}\label{eqNewNP}
(1+\mu)\int_{\R^N}|\nabla \tu|^2\, dx=\frac{N}{2}\int_{\R^N} H(\tu)+\frac{N}{4}\mu\big(h(\tu)\tu-2H(\tu)\big)\,dx.
\end{equation}
	Since $\tu\in\cM$ we get 
	$$(1+\mu)\frac{N}{2}\int_{\R^N}H(\tu)\, dx=\frac{N}{2}\int_{\R^N} H(\tu)+\frac{N}{4}\mu\big(h(\tu)\tu-2H(\tu)\big)\,dx$$
	and
	$$\mu\int_{\R^N}h(\tu)\tu-\Big(2+\frac{4}{N}\Big)H(\tu)\,dx=0.$$
	In view of (A4) and since $\frac4NG(u)\preceq H(u)$ for $u \in\R$, then similarly as in proof of Lemma \ref{lem:SDineq} we obtain
	 $\mu=0$. Therefore $\tu$ solves \eqref{eq}. In the case (b) we already know that $\tu$ is nonnegative and radially symmetric. Hence, from the maximum principle, $\tu$ is positive and the proof is completed. In the case (a) note that our solution $\tu$ is a minimizer of $J$ subject to the following constraints
	 \begin{align}
	 \label{constr1}\int_{\R^N} | \tu |^2 \, dx &= \rho > 0, \\
	 \label{constr2}\int_{\R^N} |\nabla \tu|^2 + |\tu|^2 -\frac{N}{2} H(\tu) \, dx &= \rho > 0.
	 \end{align}
	 From the regularity theory we know that every minimizer of \eqref{eq} with respect to \eqref{constr1} and \eqref{constr2} is of class $\cC^1$ (see \cite[Appendix B]{Struwe}). Hence, from \cite[Theorem 2]{Maris} there follows that $\tu$ is radially symmetric with respect to a one-dimensional affine subspace $V$ in $\R^N$.
\end{altproof}

With the aid of Lemmas \ref{lem:c_0attained} and \ref{lem:SDineq} we easy infer that the ground state energy map \eqref{gsm} is strictly decreasing. The further properties are given as follow.

\begin{Prop}\label{prop:con} Under the assumptions of Theorem \ref{th:main},
the ground state energy map $\rho\mapsto \inf_{\cS\cap\cM}J$ is continuous, strictly decreasing and $\inf_{\cS\cap\cM}J\to \infty$ as $\rho\to 0^+$. If $\eta=0$ and
\begin{equation}\label{cr}
\lim_{u\to 0} G(u)/|u|^{2^*} =\infty,
\end{equation}
and $\rho\to\infty$, then $\inf_{\cS\cap\cM}J\to 0^+$.
\end{Prop}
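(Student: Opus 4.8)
The plan is to treat the four assertions—strict monotonicity, continuity, the limit as $\rho\to0^+$, and the limit as $\rho\to\infty$—separately, writing $c(\rho):=\inf_{\cS\cap\cM}J$ for the value attached to a prescribed mass $\rho$ (so that $\cS$, $\cD$ and $\cM$ all carry an implicit dependence on $\rho$). Strict monotonicity is the cheapest: given $\rho_1<\rho_2$, both admissible for \eqref{eq:Hstrict}, I would take a minimizer $u_1\in\cS_{\rho_1}\cap\cM$ produced by Theorem \ref{th:main}; since $|u_1|_2^2=\rho_1<\rho_2$ and $u_1\in\cM$, it lies in $(\cD_{\rho_2}\setminus\cS_{\rho_2})\cap\cM$, so Lemma \ref{lem:SDineq} applied at mass $\rho_2$ gives $c(\rho_2)<J(u_1)=c(\rho_1)$. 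In particular $c$ is non-increasing, hence it can only fail to be continuous through a jump, and it suffices to establish both upper and lower semicontinuity.

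For continuity I would compare nearby values by combining the amplitude scaling $u\mapsto su$ with the $L^2$-preserving dilation $u\mapsto\lambda^{N/2}u(\lambda\cdot)$ from Lemma \ref{lem:phi}. Given a minimizer $u\in\cS_\rho\cap\cM$ and a nearby mass $\rho'$, set $s=(\rho'/\rho)^{1/2}$, so $su\in\cS_{\rho'}$; since \eqref{ineq:2} persists for $\rho'$ close to $\rho$, Lemma \ref{lem:phi} furnishes $\lambda=\lambda(\rho')$ with $\lambda^{N/2}(su)(\lambda\cdot)\in\cS_{\rho'}\cap\cM$, whence $c(\rho')\le J(\lambda^{N/2}(su)(\lambda\cdot))$. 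As $\rho'\to\rho$ we have $s\to1$ and, because $u$ already lies in $\cM$ (so the projection parameter of $u$ itself is $1$), continuous dependence of the maximization in Lemma \ref{lem:phi} on $s$ forces $\lambda(\rho')\to1$; continuity of $J$ under these operations then yields $\limsup_{\rho'\to\rho}c(\rho')\le c(\rho)$, which together with monotonicity gives left-continuity. For lower semicontinuity I would run the same construction in reverse on minimizers $u_n\in\cS_{\rho_n}\cap\cM$ with $\rho_n\to\rho$: these are bounded in $H^1(\R^N)$ by the (locally uniform) coercivity of Lemma \ref{lemCoercive} and bounded away from $0$ in $|\nabla\cdot|_2$ by Lemma \ref{lem1}, so rescaling them to $\cS_\rho$ and projecting onto $\cM$ perturbs the energy by $o(1)$, giving $c(\rho)\le\liminf_n c(\rho_n)$. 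The delicate point here—the main obstacle for this part—is exactly this uniformity: one must argue that the $\cM$-projection parameter stays near $1$ and the energy varies continuously along the whole bounded minimizing family, not merely for a single fixed function.

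The limit as $\rho\to0^+$ I would read off from the quantitative lower bound already obtained in the proof that $c>0$. There one has $c(\rho)\ge\tfrac1{2N}\delta^2$ with $\delta=(4NC_\eps C_{N,2^*}^{2^*})^{-(N-2)/4}$ and $\eps=(4NC_{N,2_*}^{2_*}\rho^{2/N})^{-1}$, where $C_\eps$ is the constant in the splitting $G(u)\le(\eps+\eta)|u|^{2_*}+C_\eps|u|^{2^*}$. As $\rho\to0^+$ one has $\eps\to\infty$, and assumptions (A1) and (A3) force $C_\eps\to0$ (near the origin the term $(\eps+\eta)|u|^{2_*}$ eventually dominates $G$, while near infinity one uses $G(u)=o(|u|^{2^*})$ uniformly on the intermediate range), so $\delta\to\infty$ and $c(\rho)\to\infty$.

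The genuinely substantial step is the limit $c(\rho)\to0^+$ as $\rho\to\infty$ under $\eta=0$ and \eqref{cr}, and here I would argue by an explicit family of spread-out, small-amplitude test functions. Fix $0\le\chi\in\cC_c^\infty(\R^N)$ with $\chi\not\equiv0$, and for $a>0$ put $u_a:=a\,\chi(\cdot/b)$ with $b=b(a)$ chosen so that $|u_a|_2^2=\rho$; as $a$ ranges over $(0,\infty)$ this is precisely a dilation orbit in $\cS_\rho$, so by Lemma \ref{lem:phi} its unique energy maximum is attained at its intersection with $\cM$, giving $c(\rho)\le\max_a J(u_a)=J(u_{a^*})$ for the $\cM$-value $a^*=a^*(\rho)$. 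A direct computation using $J|_{\cM}=\tfrac N4\int(H-\tfrac4N G)$ together with the $\cM$-relation reduces this to $J(u_{a^*})\lesssim(a^*)^{4/N}\,\omega(a^*)^{-(N-2)/2}$, where $\omega(a):=\big(\int H(a\chi)\big)/a^{2_*}\to0$ as $a\to0$, and the same relation shows $a^*\to0$ as $\rho\to\infty$. Finally I would invoke \eqref{cr}: it yields $\int H(a\chi)\ge\tfrac4N\int G(a\chi)\ge c\,K(a)\,a^{2^*}$ with $K(a)\to\infty$, hence $\omega(a)\gtrsim K(a)\,a^{2^*-2_*}=K(a)\,a^{8/(N(N-2))}$, and inserting this into the previous bound gives $J(u_{a^*})\lesssim K(a^*)^{-(N-2)/2}\to0$. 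This is where \eqref{cr} is indispensable: without the blow-up of $K(a)$ the exponents $2^*-2_*=\tfrac{8}{N(N-2)}$ balance exactly and one only recovers boundedness of the energy rather than decay, so the crux of the whole proposition is extracting the correct rate of $\omega(a)$ from the super-$2^*$ behaviour of $G$ at the origin.
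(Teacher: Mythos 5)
Your strict monotonicity argument is exactly the paper's, and your treatments of the two limits are correct but genuinely different from the paper's. For $\rho\to 0^+$ you extract the divergence from the quantitative bound $c(\rho)\ge\tfrac{1}{2N}\delta(\rho)^2$ in the proof that $c>0$; this works, provided you actually verify that the optimal constant $C_\eps$ in $G(u)\le(\eps+\eta)|u|^{2_*}+C_\eps|u|^{2^*}$ tends to $0$ as $\eps\to\infty$ (it does: (A3) handles $|u|\ge M$ with arbitrarily small constant, (A1) handles $|u|\le\de_0$, and for large $\eps$ the term $\eps|u|^{2_*}$ alone dominates $G$ on $[\de_0,M]$). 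The paper instead normalizes $|\nabla u_n|_2=1$ by dilation and uses the interpolation $|v_n|_{2_*}\le|v_n|_2^{2/(N+2)}|v_n|_{2^*}^{N/(N+2)}$. For $\rho\to\infty$ your explicit dilation orbit $a\chi(\cdot/b)$ with the identity $\omega(a^*)=C\rho^{-2/N}$ and the exponent cancellation $2^*-2_*=\tfrac{8}{N(N-2)}$ is correct and even a bit cleaner than the paper's route, which scales an actual ground state at $\rho=1$ (and therefore needs its $L^\infty$ regularity); both arguments use \eqref{cr} in the same decisive way.

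The genuine gap is in the continuity proof, in precisely the place you flag. For the direction $\liminf_{\rho'\to\rho^+}c(\rho')\ge c(\rho)$ you propose to take minimizers $u_n\in\cS_{\rho_n}\cap\cM$, rescale them to mass $\rho$ and reproject onto $\cM$, claiming the energy changes by $o(1)$. This requires the projection parameter of $s_nu_n$ to stay in a compact subset of $(0,\infty)$ \emph{uniformly along the sequence} and the map $\lambda\mapsto J(\lambda^{N/2}s_nu_n(\lambda\cdot))$ to be equicontinuous near its maximizing interval; neither follows from boundedness of $(u_n)$ alone, because the projection is an interval $[a_n,b_n]$ that need not contain, or even approach, $1$ (already for a single fixed $u$ the paper only shows $\lambda_n\to\lambda>0$ and then uses constancy of $\vp$ on $[a,b]$, not $\lambda_n\to1$ — so your parallel claim that ``$\lambda(\rho')\to1$'' in the upper-semicontinuity step is also unjustified, though harmless there since the limiting value of $J$ is still $J(u)$). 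The paper closes the right-continuity direction by a completely different mechanism: it applies the profile decomposition (Theorem \ref{ThGerard}) to the minimizers $u_n\in\cD_{\rho_n}\cap\cM$, exactly as in Lemma \ref{lem:c_0attained}, obtaining a profile $\tu$ with $|\tu|_2^2\le\rho$ and $r(\tu)\ge1$, and then rules out $r(\tu)>1$ by the strict inequality $r(\tu)^{-N}<1$ combined with Fatou's lemma and the nesting $\cD_\rho\cap\cM\subset\cD_{\rho_n}\cap\cM$. Without this (or an equivalent compactness input), your lower-semicontinuity step does not go through as written.
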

\begin{proof}
Let us denote 
$$\cD_\rho := \left\{ u \in H^1 (\R^N) \ : \ \int_{\R^N} |u|^2 \, dx \leq \rho \right\}\hbox{ and }\cS_\rho := \left\{ u \in H^1 (\R^N) \ : \ \int_{\R^N} |u|^2 \, dx = \rho \right\}.$$
Suppose that $\rho_n\to \rho^+$ as $n\to\infty$, and let $J(u_n)=\inf_{\cD_{\rho_n}\cap \cM} J$ for some $u_n\in \cD_{\rho_n} \cap \cM$. Arguing as in proof of Lemma \ref{lem:c_0attained}, $u_n\weakto \tu$ such that $r(\tu)\geq 1$ up to a translation and up to a subsequence. If $r(\tu)>1$, then
\begin{eqnarray*}
	\inf_{\cD_\rho\cap\cM}J
	&\leq& J(\tu(r(\tu)\cdot))
	=r(\tu)^{-N}\frac{N}{4}\int_{\R^N}H(\tu)-\frac{4}{N}G(\tu)\, dx\\
	&<& \frac{N}{4}\int_{\R^N}H(\tu)-\frac{4}{N}G(\tu)\, dx\\
	&\leq& \liminf_{n\to\infty} \frac{N}{4}\int_{\R^N}H(u_n)-\frac{4}{N}G(u_n)\, dx\\
	&=& \liminf_{n\to\infty} J(u_n)\leq \inf_{\cD_\rho\cap\cM}J,
\end{eqnarray*}
where the last inequality holds, since $\cD_{\rho}\cap\cM\subset \cD_{\rho_n}\cap\cM$. We get a contradiction and $r(\tu)=1$ and as in proof of Lemma \ref{lem:c_0attained} we infer that $J(\tu)=\inf_{\cD_\rho\cap \cM} J =\lim_{n\to\infty} \inf_{\cD_{\rho_n}\cap \cM} J$. Suppose that $\rho_n\to \rho^-$ as $n \to \infty$, and choose $u\in \cD_\rho\cap\cM$ so that $J(u)=\inf_{\cD_\rho\cap\cM}J$. Similarly as in \cite[Lemma 3.1]{JeanjeanLuNorm} we consider $s_n:=\sqrt{\rho_n/\rho}$, $v_n:=s_nu$
 and in view of Lemma \ref{lem:phi} we find $\lambda_n$ such that $\lambda_n^{N/2}v_n(\lambda_n\cdot)\in\cM$, however $\lambda_n$ need not be unique and $(\lambda_n)$ may be divergent. Note that $|\lambda_n^{N/2}v_n(\lambda_n\cdot)|_2=|v_n|_2=\rho_n$. If $\lambda_n\to\infty$ passing to a subsequence, then by (A2)
 $$s_n^2\int_{\R^N}|\nabla u|^2\,dx=\frac{N}{2}\int_{\R^N}\frac{H(\lambda_n^{N/2}s_n u)}{\big(\lambda_n^{N/2}\big)^{2_*}}\, dx\to \infty,$$ 
 which is a contradiction with $s_n\to 1$, as $n\to\infty$. Similarly (A3) exclude 
 $\lambda_n\to 0$ passing to a subsequence. Therefore, passing to subsequence $\lambda_n\to \lambda >0$, $\lambda^{N/2} u(\lambda \cdot) \in \cM$ and 
 $$\lim_{n\to\infty}J(\lambda_n^{N/2}v_n(\lambda_n\cdot))=
 \frac{\lambda^2}{2}\int_{\R^N}|\nabla u|^2\,dx-\int_{\R^N}G(\lambda^{N/2} u)\lambda ^{-N}\, dx=J(\lambda^{N/2}u(\lambda\cdot ))=J(u),$$
 where the last equality follows from Lemma \ref{lem:phi}, hence
 $\limsup_{n\to\infty}\inf_{\cD_{\rho_n}\cap\cM}J\leq \inf_{\cD_{\rho}\cap\cM} J$ and taking into account that $\cD_{\rho_n}\cap\cM\subset \cD_\rho\cap\cM$ we conclude the continuity of the ground state energy map.

Suppose that $\rho_n \to 0^+$ and let $J(u_n)=\inf_{\cD_{\rho_n} \cap \cM} J$ for some $u_n\in \cS_{\rho_n}$. We follow the ideas from \cite[Lemma 3.5]{JeanjeanLuNorm}. Put $\lambda_n := \frac{1}{|\nabla u_n|_2} > 0$ and $v_n := \lambda_n^{N/2} u_n(\lambda_n \cdot)$. Then $|\nabla v_n|_2 = 1$, $|v_n|_2 = |u_n|_2 = \rho_n \to 0^+$, $u_n = \lambda_n^{-N/2} v_n(\lambda_n^{-1} \cdot) \in \cM$ and $(v_n)$ is bounded in $H^1 (\R^N)$. In particular, $(v_n)$ is bounded in $L^{2^*} (\R^N)$ and from the interpolation inequality there holds
$$
|v_n|_{2_*} \leq | v_n|_2^{\frac{2}{N+2}} |v_n|_{2^*}^{\frac{N}{N+2}} = \rho_n^{\frac{2}{N+2}} |v_n|_{2^*}^{\frac{N}{N+2}} \to 0\hbox{ as } n \to \infty.
$$
Hence $|v_n|_{2_*} \to 0$ and $\int_{\R^N}G(\lambda^{N/2} v_n)\lambda ^{-N}\, dx \to 0$ as $n \to \infty$ for any fixed $\lambda > 0$. From Lemma \ref{lem:phi} there follows that 
$$
J(u_n) = J \left( \lambda_n^{-N/2} v_n(\lambda_n^{-1} \cdot) \right) \geq J \left( \lambda^{N/2} v_n(\lambda \cdot) \right) = \frac{\lambda^2}{2}-\int_{\R^N}G(\lambda v_n)\lambda ^{-N}\, dx = \frac{\lambda^2}{2} + o(1)
$$
for any $\lambda > 0$. Hence $J(u_n) \to \infty$.

Suppose now that $\eta = 0$. Then the ground state energy map \eqref{gsm} is well-defined for all $\rho > 0$. Suppose that $\rho_n \to \infty$. Take $u \in H^1(\R^N)$ as a ground state solution for the problem with $\rho = 1$, i.e. $J(u) = \inf_{\cD_1 \cap \cM} J = \inf_{\cS_1 \cap \cM} J$. From the regularity theory we know that $u$ is continuous, and therefore $u \in L^\infty (\R^N)$. Without loss of generality we may assume that $\rho_n > 1$ and, as in \cite[Lemma 3.6]{JeanjeanLuNorm}, define $u_n := \sqrt{\rho_n} u$. Then $u_n \in \cS_{\rho_n} \subset \cD_{\rho_n}$. From Lemma \ref{lem:phi} there is $\lambda_n >0$ such that $v_n :=\lambda_n^{N/2} u_n (\lambda_n \cdot) \in \cM$. In general, $\lambda_n$ is not unique. Moreover $|u_n|_2 = |v_n|_2$ so that $v_n \in \cD_{\rho_n} \cap \cM$. Hence
$$
0 < \inf_{\cD_{\rho_n} \cap \cM} J \leq J(v_n) \leq \frac12 \int_{\R^N} |\nabla v_n|^2 \, dx = \frac12 \lambda_n^2 \rho_n  \int_{\R^N} |\nabla u|^2 \, dx,
$$
so it is enough to show that $\lambda_n \sqrt{\rho_n} \to 0$. Note that
\begin{align*}
\lambda_n^2 \rho_n \int_{\R^N} |\nabla u|^2 \, dx = \int_{\R^N} |\nabla v_n|^2 \, dx = \frac{N}{2} \int_{\R^N} H(v_n) \, dx = \frac{N}{2} \lambda_n^{-N} \int_{\R^N} H( \lambda_n^{N/2} \sqrt{\rho_n} u) \, dx
\end{align*}
and
\begin{align*}
\int_{\R^N} |\nabla u|^2 \, dx = \frac{N}{2} \lambda_n^{-N-2} \rho_n^{-1} \int_{\R^N} H( \lambda_n^{N/2} \sqrt{\rho_n} u) \, dx = \frac{N}{2} \rho_n^{2/N} \int_{\R^N} \frac{H( \lambda_n^{N/2} \sqrt{\rho_n} u)}{ \left|  \lambda_n^{N/2} \sqrt{\rho_n} u \right|^{2+\frac{4}{N}} } |u|^{2+\frac{4}{N}} \, dx.
\end{align*}
$$
\int_{\R^N} \frac{H( \lambda_n^{N/2} \sqrt{\rho_n} u)}{ \left|  \lambda_n^{N/2} \sqrt{\rho_n} u \right|^{2+\frac{4}{N}} } |u|^{2+\frac{4}{N}} \, dx \to 0\hbox{ as } n \to \infty,\hbox{ and }\lambda_n^{N/2} \sqrt{\rho_n} \to 0.
$$
Fix $\varepsilon > 0$. Then, from (A5) and \eqref{cr} there follows that
$$
H(s) \geq \frac{4}{N} G(s) \geq \eps^{-1} |s|^{2^*}
$$
for sufficiently small $|s|$. Then, taking into account that $u \in L^\infty (\R^N)$, for sufficiently large $n$
\begin{align*}
\int_{\R^N} |\nabla u|^2 \, dx &= \frac{N}{2} \lambda_n^{-N-2} \frac{1}{\rho_n} \int_{\R^N} H( \lambda_n^{N/2} \sqrt{\rho_n} u) \, dx \geq \eps^{-1} \frac{N}{2} \lambda_n^{-N-2} \frac{1}{\rho_n} \left| \lambda_n^{N/2} \sqrt{\rho_n} \right|^{2^*} |u|_{2^*}^{2^*} 
\\ &= \eps^{-1} \frac{N}{2} \lambda_n^{\frac{4}{N-2}} \rho_n^{\frac{2}{N-2}}|u|_{2^*}^{2^*} = \eps^{-1} \frac{N}{2} (\lambda_n^2 \rho_n)^{\frac{2}{N-2}}|u|_{2^*}^{2^*} 
\end{align*} 
and $\lambda_n^2 \rho_n \to 0$ as $n \to \infty$, which completes the proof.
\end{proof}

\section*{Acknowledgements}
\noindent The authors would like to thank Louis Jeanjean and Sheng-Sen Lu for valuable comments helping to improve the first version of this paper. The authors also thank the Department of Mathematics,
Karlsruhe Institute of Technology (KIT) in Germany, where a part of this work has been prepared, for the warm hospitality. Bartosz Bieganowski was partially supported by the National Science Centre, Poland (Grant No. 2017/25/N/ST1/00531). Jaros\l aw Mederski was partially supported by the National Science Centre, Poland (Grant No. 2017/26/E/ST1/00817).

\end{document}